\DeclarePairedDelimiter\abs{\lvert}{\rvert}
\DeclarePairedDelimiter\ceil{\lceil}{\rceil}
\DeclarePairedDelimiter\floor{\lfloor}{\rfloor}
\newtheorem{theorem}{Theorem}[section]
\newtheorem{cor}[theorem]{Corollary}
\newtheorem{prop}[theorem]{Proposition}
\newtheorem{con}[theorem]{Conjecture}
\newcommand{\n}{\noindent}
\begin{document}
\title{\bf DNA graph characterization for the line digraph of dicycle with $\boldsymbol{\floor*{\frac{n}{3}}}$ chords, $\boldsymbol{\infty}$-digraph $\boldsymbol{C_n \cdot C_p}$, and 3-blade-propeller $\boldsymbol{C_n \cdot C_p \cdot C_q}$}

\author{\small Inne Singgih}
\affil{\small University of South Carolina\\{\tt \small  isinggih@email.sc.edu}}

\date{}
\maketitle
\begin{abstract}
\noindent DNA graph has important contribution in completing the computational step of DNA sequencing process. Using $(\alpha,k)$-labeling, several families of digraphs have characterized as DNA graphs. Dicycles and dipaths are DNA graphs, rooted trees and self adjoint digraphs are DNA graphs if and only if their maximum degree is not greater than four, while the $m\textsuperscript{th}$ line digraph of dicycle with one chord is a DNA graph for all $m\in\mathbb{Z}^+$. In this paper we construct $(\alpha,k)$-labeling to show that for all $m\in\mathbb{Z}^+$, the $m\textsuperscript{th}$ line digraph of dicycle $C_n$ with $\floor*{\frac{n}{3}}$ chords are DNA graphs for $n\geq 6$, and the $m\textsuperscript{th}$ line digraph of $\infty$-digraph $C_n\cdot C_p$ and 3-blade-propeller $C_n\cdot C_p \cdot C_q$ are DNA graphs for $n\geq 3$ and certain values of $p$ and $q$.\\

\noindent Keywords:  DNA graph, graph labeling, $(\alpha,k)$-labeling
\end{abstract}
\section{Introduction}

\n Given a digraph $D=(V,A)$, for every arc $a=uv\in A$, $u$ is called the \textit{tail} of $a$and $v$ is called the \textit{head} of $a$. The \textit{line digraph} of $D$, $L(D)$, is a digraph with vertex set $V(L(D))=A(D)$ and an arc $xy$ exists in $A(L(D))$ iff the head of $x$ is the tail of $y$ in $D$. A digraph $D$ is said to be \textit{self-adjoint} if $D\cong L(D)$ \cite{baca}. Line Digraph of $L(D)$ denoted by $L(L(D))$ or simply $L^2(D)$, similarly line digraph of $L^m(D)$ denoted by $L^{m+1}(D)$ for $m\in\mathbb{N}$ \cite{singgih}. \\[-2mm]

\n Let $\alpha>0$ and $k>1$ be integers. A digraph $D=(V,A)$ is said to be $(\alpha,k)$-labeled if it is possible to label each vertex $x$ of $D$ with a $k$-length label $\left(l_1(x),l_2(x),\ldots,l_k(x)\right)$, such that 
\begin{enumerate}[nolistsep]
\item[(1)] $l_i(x)\in \{1,2,\ldots,\alpha\}$ for all $x\in V$ and $i\in \{1,2,\ldots,k\}$,
\item[(2)] each vertex has different labels: $\left(l_1(x),l_2(x),\ldots,l_k(x)\right) \neq \left(l_1(y),l_2(y),\ldots,l_k(y)\right)$ if $x\neq y$,
\item[(3)] deBruijn property holds: $xy\in A \Leftrightarrow l_i(x)=l_{i-1}(y)$ for $i\in \{2,3,\ldots,k\}$.
\end{enumerate} 
\n $D=(V,A)$ is said to be quasi-$(\alpha,k)$-labeled if everything above holds while property (3) is relaxed to $xy\in A \Rightarrow l_i(x)=l_{i-1}(y)$ for $i\in \{2,3,\ldots,k\}$. Obviously if a digraph $D$ is $(\alpha,k)$-labeled then $D$ is quasi-$(\alpha,k)$-labeled. \\[-2mm]

\n A \textit{DNA graph} is a digraph $D$ that is $(4,k)$-labeled for some positive integer $k$ \cite{li}. Here $\alpha=4$ translates the label $1,2,3,4$ into the four nucleotide bases: Adenine $(A)$, Cytosine $(C)$, Guanine $(G)$, and Thymine $(T)$, so that each vertex of $D$ represent a spectrum of nucleotide bases, that are to be merged into the target DNA strand (longer spectrum) in the DNA sequencing process.\\[-2mm]

\section{Known results}

\n The relationships between DNA sequencing and labeled graphs were described for the first time by Lysov et al. \cite{lysov} and Pevzner \cite{pevzner}.\\[-7mm]

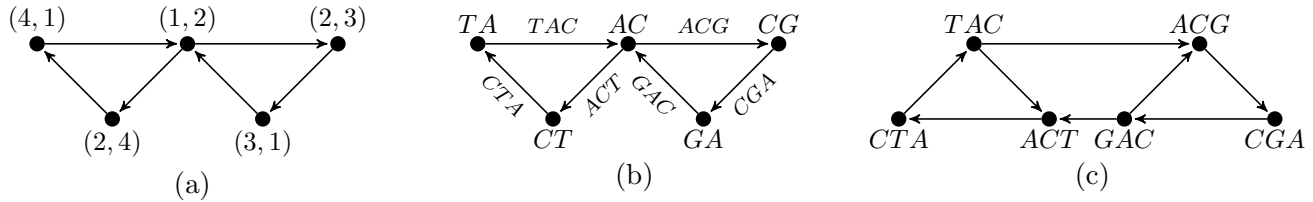
\begin{figure}[h]
\begin{subfigure}{.325\textwidth}
\centering
\begin{tikzpicture}[->,>=stealth',shorten >=1pt,auto, semithick]
\tikzstyle{every state}=[draw,circle,thick, fill=black, minimum size=5pt,
                            inner sep=0pt]
	\node[state] (41) at (0,1) [label={[label distance=-1mm]above:{\footnotesize ${(4,1)}$}}]{ };
	\node[state] (12) at (2,1) [label={[label distance=-1mm]above:{\footnotesize ${(1,2)}$}}]{ };
	\node[state] (23) at (4,1) [label={[label distance=-1mm]above:{\footnotesize ${(2,3)}$}}]{ };
	\node[state] (24) at (1,0) [label={[label distance=-1mm]below:{\footnotesize ${(2,4)}$}}]{ };
	\node[state] (31) at (3,0) [label={[label distance=-1mm]below:{\footnotesize ${(3,1)}$}}]{ };
	
	\path (41)  edge node { }(12)   
 	      (12)  edge node { }(23) edge node { }(24)
 	      (23)  edge node { }(31)
 	      (31)  edge node { }(12)
 	      (24)  edge node { }(41) ;
\end{tikzpicture}    
\vspace{-2mm}
\caption{ }
\end{subfigure}
\begin{subfigure}{.325\textwidth}
\centering
\begin{tikzpicture}[->,>=stealth',shorten >=1pt,auto, semithick]
\tikzstyle{every state}=[draw,circle,thick, fill=black, minimum size=5pt,
                            inner sep=0pt]
	\node[state] (TA) at (0,1) [label={[label distance=-1mm]above:{\footnotesize ${TA}$}}]{ };
	\node[state] (AC) at (2,1) [label={[label distance=-1mm]above:{\footnotesize ${AC}$}}]{ };
	\node[state] (CG) at (4,1) [label={[label distance=-1mm]above:{\footnotesize ${CG}$}}]{ };
	\node[state] (CT) at (1,0) [label={[label distance=-1mm]below:{\footnotesize ${CT}$}}]{ };
	\node[state] (GA) at (3,0) [label={[label distance=-1mm]below:{\footnotesize ${GA}$}}]{ };
	
	\path (TA)  edge node {{\scriptsize ${TAC}$}}(AC)   
 	      (AC)  edge node {{\scriptsize ${ACG}$}}(CG) 
 	            edge node[rotate=45,xshift=-5mm] {{\scriptsize ${ACT}$}}(CT)
 	      (CG)  edge node[rotate=45,xshift=-5mm] {{\scriptsize ${CGA}$}}(GA)
 	      (GA)  edge node[rotate=-45,xshift=5mm] {{\scriptsize ${GAC}$}}(AC)
 	      (CT)  edge node[rotate=-45,xshift=5mm] {{\scriptsize ${CTA}$}}(TA) ;
\end{tikzpicture}    
\vspace{-2mm}
\caption{ }
\end{subfigure}
\begin{subfigure}{.35\textwidth}
\centering
\begin{tikzpicture}[->,>=stealth',shorten >=1pt,auto, semithick]
\tikzstyle{every state}=[draw,circle,thick, fill=black, minimum size=5pt,
                            inner sep=0pt]
	\node[state] (412) at (1,1) [label={[label distance=-1mm]above:{\footnotesize ${TAC}$}}]{ };
	\node[state] (123) at (4,1) [label={[label distance=-1mm]above:{\footnotesize ${ACG}$}}]{ };
	\node[state] (241) at (0,0) [label={[label distance=-1mm]below:{\footnotesize ${CTA}$}}]{ };
	\node[state] (124) at (2,0) [label={[label distance=-1mm]below:{\footnotesize ${ACT}$}}]{ };
	\node[state] (312) at (3,0) [label={[label distance=-1mm]below:{\footnotesize ${GAC}$}}]{ };
	\node[state] (231) at (5,0) [label={[label distance=-1mm]below:{\footnotesize ${CGA}$}}]{ };
	
	\path (412)  edge node { }(123) 
	             edge node { }(124)  
 	      (123)  edge node { }(231) 
 	      (231)  edge node { }(312)
 	      (312)  edge node { }(124) 
 	             edge node { }(123)
 	      (124)  edge node { }(241)
 	      (241)  edge node { }(412) ;
\end{tikzpicture}  
\vspace{-2mm}
\caption{ }  
\end{subfigure}
\vspace{-3mm}
\caption{\small (a) A $(2,4)$-labeling of a digraph and the corresponding (b) Pevzner graph and (c) Lysov graph.}
\label{fig: letternumber}
\end{figure}

\n The Pevzner graph (graph (b)) is obtained by translating $1,2,3,4$ in the vertices' label in graph (a) into $A, C, G, T$, respectively. The arcs' label is obtained by merging the label of its head and tail on the overlapping bases. The Lysov graph (graph (c)) is the line digraph of the Pevzner graph. An Eulerian path in graph (b) is equivalent to Hamiltonian path in graph (c). Both paths, if starting at the top left vertex, represent the target spectrum $TACGACTA$.\\[-2mm]

\begin{prop}
\emph{\cite{li}} If a digraph $D$ is quasi-$(\alpha-1,k)$-labeled, then $D$ is $(\alpha,k)$-labeled.
\end{prop}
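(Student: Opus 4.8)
The plan is to keep the given quasi-$(\alpha-1,k)$-labeling and repair it into a genuine $(\alpha,k)$-labeling using the one symbol, $\alpha$, that the quasi-labeling never uses. Write each vertex label as a string $s_x=l_1(x)l_2(x)\cdots l_k(x)$ over $\{1,\dots,\alpha-1\}$, and for a vertex $x$ call $\sigma(x)=(l_2(x),\dots,l_k(x))$ its \emph{tail-window} and $\pi(x)=(l_1(x),\dots,l_{k-1}(x))$ its \emph{head-window}. By property (3) in its relaxed form, every arc $xy\in A$ already satisfies $\sigma(x)=\pi(y)$, so the forward implication comes for free; the whole content of the statement is to enforce the converse, i.e.\ to destroy every \emph{false pair} $(x,y)\notin A$ for which $\sigma(x)=\pi(y)$, while neither breaking a genuine overlap nor making two labels coincide.

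First I would organize the false pairs by the common window $w$ through which they overlap: for each length-$(k-1)$ string $w$ set $T_w=\{x:\sigma(x)=w\}$ and $H_w=\{y:\pi(y)=w\}$, so the overlaps routed through $w$ are exactly $T_w\times H_w$ while the genuine arcs routed through $w$ are $A\cap(T_w\times H_w)$. Being a genuine labeling through $w$ means these coincide, that is $T_w\times H_w\subseteq A$. The plan is then to \emph{refine} each offending window by introducing the fresh symbol $\alpha$ into a single coordinate of selected labels, so that a window $w$ carrying a non-arc splits into refined windows $w',w''$ with the tails in $T_w$ and heads in $H_w$ redistributed so that each surviving complete-bipartite overlap set equals the corresponding arc set. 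Doing this for all windows produces candidate labels over $\{1,\dots,\alpha\}$.

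Finally I would check the three defining properties for the modified labeling: condition (1) holds because only $\alpha$ is ever introduced, so all entries stay in $\{1,\dots,\alpha\}$; condition (2) holds because distinct strings remain distinct under a coordinatewise refinement and the newly created strings are separated by the position of the inserted $\alpha$; and condition (3) now holds in both directions by construction. The main obstacle — the step I expect the author's proof to treat with care — is \emph{global consistency}: the string $s_x$ simultaneously determines the head-window governing every in-arc of $x$ and the tail-window governing every out-arc of $x$, so inserting $\alpha$ to kill one false pair perturbs all overlaps incident to $x$ at once. The delicate point is therefore to place $\alpha$ coherently with the entire in- and out-neighbourhood of each vertex, so that no genuine arc loses its overlap while every false pair is separated; this is precisely where the special structure of the labeled vertices' shared windows (the line-digraph-like behaviour of their neighbourhoods) must be invoked.
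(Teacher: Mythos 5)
The paper gives no proof of this proposition---it is quoted from \cite{li}---so there is nothing to match your argument against; judged on its own, your proposal has a genuine gap, and the gap is not fillable. The step you explicitly defer (placing the new symbol $\alpha$ ``coherently with the entire in- and out-neighbourhood of each vertex'') is the whole content of the statement, and it cannot be carried out, because under the definitions stated in this paper the proposition is false. Take $V(D)=\{x,x',y,y'\}$ with $A(D)=\{xy,\,x'y,\,x'y'\}$ and labels $x\mapsto(1,1,2)$, $x'\mapsto(2,1,2)$, $y\mapsto(1,2,1)$, $y'\mapsto(1,2,2)$. This is a quasi-$(2,3)$-labeling: the labels are distinct, use only $\{1,2\}$, and each of the three arcs has the required overlap. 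The proposition (with $\alpha=3$, $k=3$) would then make $D$ $(3,3)$-labeled, but in fact $D$ admits no $(\alpha,k)$-labeling for \emph{any} $\alpha$ and $k$: writing $\sigma,\pi$ for the tail- and head-windows of any putative genuine labeling, the arcs $xy$ and $x'y$ force $\sigma(x)=\pi(y)=\sigma(x')$, the arc $x'y'$ forces $\sigma(x')=\pi(y')$, hence $\sigma(x)=\pi(y')$, and the unrelaxed deBruijn property then forces $xy'\in A$, a contradiction. This is exactly the ``global consistency'' obstacle you flagged: the window equalities are forced by the genuine arcs themselves, so any insertion of $\alpha$ propagates along the chain $x\to y\leftarrow x'\to y'$ rather than separating $x$ from $y'$. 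In your own language: a genuine labeling gives $N^+(v)=H_{\sigma(v)}$ for every vertex $v$, so any two vertices have equal or disjoint out-neighbourhoods (the classical line-digraph/Heuchenne condition), whereas quasi-labeled digraphs need not satisfy this; no refinement scheme using one extra letter (or any number of extra letters) can bridge that structural gap.

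Two further remarks. First, the failure is not special to digraphs violating the line-digraph condition, and it is not repaired by adjusting parameters while keeping $k$ fixed: the dipath $v_1\to v_2\to v_3$ is quasi-$(2,2)$-labeled by $(1,1),(1,2),(2,2)$, yet in any genuine $k=2$ labeling the two arcs force the labels to have the form $(p,q),(q,r),(r,s)$, and excluding the six non-arcs (including the three loops) forces $p,q,r,s$ to be pairwise distinct, so four symbols are required and this dipath is not $(3,2)$-labeled. Second, a smaller unproved claim in your sketch: after coordinates are overwritten with $\alpha$, neither the distinctness of labels (condition (2)) nor the survival of genuine overlaps is automatic; both would need an argument even in cases where the strategy could succeed. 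Since the statement as quoted here is refutable, the result in \cite{li} must rest on a stronger hypothesis or on a different notion of quasi-labeling than the one defined in this paper; your own (correct) observation that a genuine labeling requires $T_w\times H_w\subseteq A$ for every window $w$ is precisely the tool that exposes this discrepancy.
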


\begin{theorem}
\emph{\cite{li}} If a digraph $D$ is quasi-$(\alpha,k-1)$-labeled, then $L(D)$ is $(\alpha,k)$-labeled.
\end{theorem}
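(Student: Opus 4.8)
The plan is to build the required $(\alpha,k)$-labeling of $L(D)$ directly from the given quasi-$(\alpha,k-1)$-labeling of $D$ by an ``overlap merge'' on each arc. Recall that a vertex of $L(D)$ is an arc $a=uv$ of $D$, where $u,v\in V(D)$ carry $(k-1)$-length labels $\ell(u)=(l_1(u),\dots,l_{k-1}(u))$ and $\ell(v)=(l_1(v),\dots,l_{k-1}(v))$. Since $uv\in A(D)$, the quasi property gives $l_i(u)=l_{i-1}(v)$ for every $i\in\{2,\dots,k-1\}$, so the labels of $u$ and $v$ overlap in their last, resp.\ first, $k-2$ coordinates. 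I would therefore assign to the vertex $a=uv$ of $L(D)$ the $k$-length label
\[
L(a)=\bigl(l_1(u),l_2(u),\dots,l_{k-1}(u),\,l_{k-1}(v)\bigr),
\]
whose first $k-1$ coordinates are exactly $\ell(u)$ and whose last $k-1$ coordinates are, by the overlap just noted, exactly $\ell(v)$.

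With this assignment property (1) is immediate, since all coordinates come from the entries of $D$'s labels and hence lie in $\{1,\dots,\alpha\}$. For property (2), suppose $L(a)=L(a')$ for arcs $a=uv$ and $a'=u'v'$. Reading off the first $k-1$ coordinates gives $\ell(u)=\ell(u')$, and reading off the last $k-1$ coordinates gives $\ell(v)=\ell(v')$; by the distinctness of $D$'s labeling (property (2) for $D$) this forces $u=u'$ and $v=v'$, hence $a=a'$.

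The heart of the argument is property (3), the full deBruijn equivalence. Let $a=uv$ and $b=wz$ be vertices of $L(D)$. The condition $l_i(a)=l_{i-1}(b)$ for all $i\in\{2,\dots,k\}$ says precisely that the last $k-1$ coordinates of $L(a)$ equal the first $k-1$ coordinates of $L(b)$; by construction these are $\ell(v)$ and $\ell(w)$ respectively, so the condition is equivalent to $\ell(v)=\ell(w)$. By the distinctness of $D$'s labeling, $\ell(v)=\ell(w)$ holds iff $v=w$, i.e.\ iff the head of $a$ equals the tail of $b$, which is exactly the definition of $ab\in A(L(D))$. This establishes both implications of the deBruijn property simultaneously.

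The step I expect to require the most care is precisely this backward implication in (3): it is here that one passes from a merely one-directional (quasi) hypothesis on $D$ to the genuine two-directional deBruijn property on $L(D)$. The gain is possible only because adjacency in a line digraph is rigidly determined by the head/tail coincidence, and property (2) of $D$'s labeling lets us detect that coincidence from the labels alone. I would double-check the index bookkeeping, verifying that the overlap supplied by the quasi property (which covers $i\in\{2,\dots,k-1\}$) lines up exactly with the $k-1$ shared coordinates needed in the merge, so that no boundary coordinate is mishandled.
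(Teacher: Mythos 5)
Your proof is correct. The paper states this theorem without proof, citing Li and Zhang \cite{li}, and your overlap-merge construction $L(a)=\bigl(l_1(u),\dots,l_{k-1}(u),l_{k-1}(v)\bigr)$ for an arc $a=uv$ is exactly the standard argument behind that cited result, including the key point that distinctness of $D$'s vertex labels is what upgrades the one-directional quasi hypothesis to the full two-directional deBruijn property in $L(D)$.
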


\begin{cor}
If a digraph $D$ is quasi-$(\alpha,k-1)$-labeled for $\alpha\leq 4$ and $k>2$, then its $m\textsuperscript{th}$ line digraph $L^m(D)$ is a DNA graph for all positive integer $m$.
\label{cor: repeat}
\end{cor}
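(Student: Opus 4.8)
The plan is to iterate Theorem 2.2 by induction on $m$, exploiting the observation recorded in the definitions that every $(\alpha,k)$-labeled digraph is automatically quasi-$(\alpha,k)$-labeled. This downgrade is what makes the iteration legitimate: Theorem 2.2 consumes a quasi-labeling but produces a \emph{proper} labeling, so before re-applying it I must first relax the output back to a quasi-labeling. For the base case $m=1$, the hypothesis that $D$ is quasi-$(\alpha,k-1)$-labeled is meaningful precisely because $k>2$ forces the label length to satisfy $k-1>1$, and Theorem 2.2 applies directly to give that $L(D)=L^1(D)$ is $(\alpha,k)$-labeled.

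For the inductive step I would assume that $L^m(D)$ is $(\alpha,k+m-1)$-labeled. Since this is in particular a quasi-$(\alpha,k+m-1)$-labeling, and since $k+m-1>1$ (as $k>2$ and $m\geq 1$), Theorem 2.2 applies once more to $L^m(D)$, whose line digraph is $L^{m+1}(D)$; it yields that $L^{m+1}(D)$ is $(\alpha,k+m)$-labeled. This closes the induction and establishes that $L^m(D)$ is $(\alpha,k+m-1)$-labeled for every positive integer $m$.

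It then remains to convert this into a DNA labeling, i.e. a $(4,k')$-labeling for some positive integer $k'$. Because $\alpha\leq 4$, the label set $\{1,2,\ldots,\alpha\}$ is contained in $\{1,2,3,4\}$, so any $(\alpha,k+m-1)$-labeling already satisfies condition (1) with $\alpha$ replaced by $4$, while conditions (2) and (3) are insensitive to the alphabet size; hence the very same labeling is a $(4,k+m-1)$-labeling. (Alternatively, one may raise the alphabet from $\alpha$ to $4$ one symbol at a time via Proposition 2.1.) As $k+m-1$ is a positive integer, $L^m(D)$ is a DNA graph.

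I expect no genuine obstacle here: the statement is essentially a bookkeeping consequence of Theorem 2.2, and the only two points requiring care are (i) the quasi-versus-proper alternation that licenses each successive application of the theorem, and (ii) the verification that the label-length parameter stays strictly greater than $1$ at every stage, which is exactly what the hypothesis $k>2$ secures. Making these explicit in the induction is what turns the repeated application into a rigorous argument.
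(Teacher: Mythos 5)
Your proof is correct and follows exactly the argument the paper intends for this corollary (which it states without an explicit proof): iterate Theorem 2.2, using the remark that every $(\alpha,k)$-labeled digraph is quasi-$(\alpha,k)$-labeled to relicense each application, and finish by observing that an $(\alpha,k')$-labeling with $\alpha\leq 4$ is already a $(4,k')$-labeling. Your two points of care --- the quasi/proper alternation and the label length staying above $1$, guaranteed by $k>2$ --- are precisely the bookkeeping the paper leaves implicit.
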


\begin{theorem}
\emph{\cite{li}} Every dipath and every dicycle are DNA graphs, while rooted trees and self-adjoint digraphs are DNA graphs when $\Delta\leq 4$.
\label{th: known}
\end{theorem}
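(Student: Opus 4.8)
The common thread I would use is that an $(\alpha,k)$-labeling is nothing but a realization of $D$ as an \emph{induced} subdigraph of the de Bruijn digraph $B(\alpha,k)$, whose vertices are the length-$k$ words over $\{1,\dots,\alpha\}$ and whose arcs join $u$ to $v$ exactly when the length-$(k-1)$ suffix of $u$ equals the length-$(k-1)$ prefix of $v$; this is precisely condition~(3). A quasi-labeling is the same statement with ``induced'' dropped, i.e.\ an embedding as a (possibly non-induced) subdigraph. Since every vertex of $B(4,k)$ has in-degree and out-degree exactly $4$, I first record the necessary degree bound: any DNA graph has all in- and out-degrees at most $4$, which is exactly what $\Delta\le4$ supplies in the tree and self-adjoint cases (and is the source of the ``only if'' in the abstract).

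For a dipath $P_n=v_1v_2\cdots v_n$ I would produce a single word $w=w_1w_2\cdots w_{n+k-1}$ over $\{1,2,3,4\}$ and label $v_i$ by the window $(w_i,\dots,w_{i+k-1})$. The shift in condition~(3) between consecutive windows is automatic, while both the distinctness of the labels and the absence of spurious arcs reduce to one requirement: all length-$(k-1)$ factors of $w$ are pairwise distinct. Such a $w$ is exactly a simple path in $B(4,k-1)$ on $n+1$ vertices, and one exists as soon as $4^{k-1}\ge n+1$, so for instance $k=\lceil\log_4(n+1)\rceil+2$ works. The dicycle $C_n$ is handled identically with a cyclic word, i.e.\ a simple directed cycle of length $n$ in $B(4,k-1)$, available for every $n$ once $k$ is large, giving the $(4,k)$-labeling directly.

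For a rooted tree with $\Delta\le4$ I would root it and assign labels in breadth-first order: give the root an arbitrary word, and when a vertex $x$ carrying $(l_1,\dots,l_k)$ has children $y_1,\dots,y_d$ (with $d\le4$ by the degree bound), set $l(y_j)=(l_2,\dots,l_k,c_j)$ for distinct symbols $c_j\in\{1,2,3,4\}$. The ``$\Rightarrow$'' half of condition~(3) holds by construction and the distinct $c_j$ separate siblings. The real work is the ``$\Leftarrow$'' direction together with global distinctness, which I would force by taking $k$ larger than the height of the tree and encoding the root-to-$x$ path inside the label, so that a word pins down the vertex carrying it; this bookkeeping --- upgrading the obvious quasi-labeling to a genuine induced labeling --- is the delicate point here.

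For a self-adjoint digraph $D$ with $\Delta\le4$ I would route through the line-digraph machinery rather than build a labeling by hand. From $D\cong L(D)$ one gets $D\cong L^{m}(D)$ for every $m$, so by Corollary~\ref{cor: repeat} it is enough to produce a single quasi-$(\alpha,k-1)$-labeling of $D$ with $\alpha\le4$ and $k>2$; the corollary then promotes $L^{m}(D)\cong D$ to a DNA graph. Writing $D\cong L^{j}(D)$ and identifying the vertices of $D$ with the directed walks $v_0v_1\cdots v_j$ of length $j$ in $D$, I would label each such walk by $(g(v_0),\dots,g(v_j))$ for a suitable $g\colon V(D)\to\{1,2,3,4\}$; the shift in condition~(3) then holds automatically, giving a quasi-$(4,j+1)$-labeling and hence the hypotheses of Corollary~\ref{cor: repeat} for $j\ge1$. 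The principal obstacle of the whole theorem lives here: choosing $g$ so that the finitely many walks representing the vertices receive pairwise distinct color words. This is the exact generalization of the distinct-window requirement met for the dipath and dicycle, the bound $\Delta\le4$ entering by limiting the in- and out-degrees so that four symbols leave enough room to separate the walks; carrying this out --- and, for the converse, noting that $\Delta>4$ forces an in- or out-degree exceeding $4$ and so precludes any de Bruijn embedding --- is where the effort concentrates.
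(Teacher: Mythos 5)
A preliminary point: the paper contains no proof of Theorem \ref{th: known} --- it is quoted from \cite{li} as known background, and only the new results of Sections 3--5 are proved in the paper. So there is no internal argument to compare yours against, and I can only judge the proposal on its own merits. On those merits, your de Bruijn reformulation is the right lens, and the dipath and dicycle cases are correct and essentially complete: a $(4,k)$-labeling is exactly an induced embedding into the de Bruijn digraph $B(4,k)$, and a word (resp.\ cyclic word) whose length-$(k-1)$ windows are pairwise distinct produces a dipath (resp.\ dicycle) as an induced subdigraph. The one assertion you leave unproved there --- that $B(4,k-1)$ contains a simple cycle of length exactly $n$ once $k$ is large --- is true and easy to patch: the cyclic word $1^{n-1}2$, read with window length $n-1$, has all $n$ of its cyclic windows distinct.

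The genuine gaps are precisely the two steps you defer as ``the delicate point'' and ``where the effort concentrates''; these are not bookkeeping but the actual content of the theorem for trees and self-adjoint digraphs. For rooted trees, the plan ``take $k$ larger than the height and encode the root-to-$x$ path in the label'' fails as stated, because nothing in a label marks where the root-word padding ends and the path encoding begins: with root word $(1,1,\ldots,1)$ (a legitimate instance of ``give the root an arbitrary word''), the depth-one child reached via symbol $2$ and the depth-two vertex reached via symbols $1,2$ both receive the label $(1,\ldots,1,2)$; and since a vertex may have four children, all four symbols can occur as child symbols, so no symbol can be reserved as a depth marker. Repairing this needs a further idea --- e.g.\ an aperiodic root word, so that a cross-depth collision would force a short period along a long stretch of it, or better, the trick you already use in the self-adjoint case: $T\cong L(T^{+})$, where $T^{+}$ is $T$ with one extra super-root, so by the lifting theorem of \cite{li} (a quasi-labeling of $D$ gives a genuine labeling of $L(D)$) a mere quasi-labeling of $T^{+}$ suffices and the troublesome $\Leftarrow$ direction comes for free, though distinctness still needs the aperiodicity input. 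For self-adjoint digraphs, the reduction via $D\cong L^{m}(D)$ and Corollary \ref{cor: repeat} is sound, but the existence of a coloring $g$ separating all length-$j$ walks is exactly the theorem for this class, and nothing in the proposal addresses it. Finally, your parenthetical converse remark is false as stated: total degree $\Delta>4$ does not force an in- or out-degree exceeding four (in-degree $1$ with out-degree $4$ gives degree $5$), so the necessity of the degree bound requires reading $\Delta$ as the relevant one-sided degree, not as a consequence of your embedding obstruction.
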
 

\n For results in Theorem \ref{th: known}, the line digraph of the stated digraphs are either themselves or smaller digraphs of the same family, so applying Corollary \ref{cor: repeat} to these results is not useful. In \cite{singgih}, we managed to construct a quasi-$(4,k)$-labeling for dicycle with one chord $C^{\floor*{\frac{n}{2}}}_n$, where $\floor*{\frac{n}{2}}$ is the distance between the head and the tail of the arc that serves as the chord. As $\abs*{L^{m+1}\left(C^{\floor*{\frac{n}{2}}}_n\right)}>\abs*{L^{m}\left(C^{\floor*{\frac{n}{2}}}_n\right)}$ for all $m\in\mathbb{N}$, applying Corollary \ref{cor: repeat} gives infinitely many new types of DNA graphs.

\begin{theorem}
\emph{\cite{singgih}} $L^m\left(C^{\floor*{\frac{n}{2}}}_n\right)$ is a DNA graph for all $n\geq 4$ and positive integer $m$.
\label{th: 1ch}
\end{theorem}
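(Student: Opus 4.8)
\n The plan is to obtain the whole infinite family from a single labeling. By Corollary~\ref{cor: repeat} with $\alpha=4$, as soon as we exhibit one quasi-$(4,t)$-labeling of the base digraph $D:=C^{\floor*{\frac{n}{2}}}_n$ for some integer $t\geq 2$, every line digraph $L^{m}(D)$ with $m\geq 1$ is automatically a DNA graph. So the theorem reduces to producing a single quasi-$(4,t)$-labeling of $D$, and all the real work sits in that construction.

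\n I would first fix notation: write the cycle of $D$ as $v_0\to v_1\to\cdots\to v_{n-1}\to v_0$ and take the chord to be $v_{\floor*{\frac{n}{2}}}\to v_0$, so that the distance from its head $v_0$ to its tail $v_{\floor*{\frac{n}{2}}}$ along the cycle is $\floor*{\frac{n}{2}}$. Relative to a plain dicycle, the only irregular vertices are $v_{\floor*{\frac{n}{2}}}$, of out-degree $2$, and $v_0$, of in-degree $2$. Next I would encode a labeling by one cyclic word $s_0s_1\cdots s_{n-1}$ over $\{1,2,3,4\}$ (indices mod $n$) via $l(v_i)=(s_{i+1},s_{i+2},\ldots,s_{i+t})$. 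With this encoding the overlap required by property~(3) holds automatically on every cycle arc $v_i\to v_{i+1}$, because $l(v_i)$ and $l(v_{i+1})$ share the block $(s_{i+2},\ldots,s_{i+t})$, and this persists through the wrap-around arc $v_{n-1}\to v_0$ since the word is cyclic of period $n$. Because we only need the quasi-property (arc $\Rightarrow$ overlap), accidental overlaps between non-adjacent vertices cost nothing, which is precisely what supplies the construction's slack.

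\n Under this encoding the problem collapses to two conditions on the word. The chord $v_{\floor*{\frac{n}{2}}}\to v_0$ demands the gluing equations $s_{\floor*{\frac{n}{2}}+1+j}=s_j$ for $j=1,\ldots,t-1$ (the length-$(t-1)$ suffix of $l(v_{\floor*{\frac{n}{2}}})$ must equal the length-$(t-1)$ prefix of $l(v_0)$), while property~(2) demands that the $n$ windows $l(v_0),\ldots,l(v_{n-1})$ be pairwise distinct. I would take $t$ to be the least integer that is both $\geq 2$ and satisfies $4^{t}\geq n$, which leaves room in $\{1,2,3,4\}^{t}$ for $n$ distinct windows. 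The gluing only copies a block of length $t-1$ to a second location, and since $t-1<t$ this forces no two full windows to coincide by itself; one merely has to separate the two boundary extensions of that block, i.e.\ arrange $s_0\neq s_{\floor*{\frac{n}{2}}+1}$ and $s_t\neq s_{\floor*{\frac{n}{2}}+t+1}$. The remaining freedom should then be spent to make all windows distinct, most cleanly by realizing the word as a closed trail of length $n$ in the de Bruijn graph $B(4,t-1)$ that threads through the prescribed repeated block; closed trails of the required length can be assembled in this (Eulerian) graph by the standard ear/merging argument.

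\n I expect the main obstacle to be exactly the interaction of the forced repeat with global distinctness at the two branch vertices. At $v_{\floor*{\frac{n}{2}}}$ the two out-neighbors $v_{\floor*{\frac{n}{2}}+1}$ and $v_0$ are forced to agree in their first $t-1$ coordinates and may differ only in the last, and dually the two in-neighbors of $v_0$ are forced to agree in their last $t-1$ coordinates; keeping these near-identical labels distinct while simultaneously closing the trail up is the delicate point, and it interacts with the parity of $n$ through the position $\floor*{\frac{n}{2}}$ of the gluing. I anticipate this needing an explicit patterned word plus a short case analysis on $n\bmod 2$ and on the small values of $n$ near the threshold $4^{t}$, rather than a pure existence count. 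The payoff is that the labeling never requires a fifth symbol, so it stays inside $\{1,2,3,4\}$ and Corollary~\ref{cor: repeat} applies verbatim to give the stated conclusion for all $n\geq 4$ and all $m\in\mathbb{Z}^{+}$.
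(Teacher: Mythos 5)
Your high-level reduction is exactly the paper's: Theorem \ref{th: 1ch} is quoted from \cite{singgih}, and both there and here the entire proof consists of exhibiting one quasi-$(4,t)$-labeling of $C^{\floor*{\frac{n}{2}}}_n$ with $t\geq 2$ and then invoking Corollary \ref{cor: repeat}. Your cyclic-word encoding is also set up correctly: with $l(v_i)=(s_{i+1},\ldots,s_{i+t})$ the overlap on cycle arcs is automatic, the chord $v_{\floor*{\frac{n}{2}}}\to v_0$ becomes the gluing equations $s_{\floor*{\frac{n}{2}}+1+j}=s_j$ for $j=1,\ldots,t-1$, the two local distinctness conditions you isolate ($s_0\neq s_{\floor*{\frac{n}{2}}+1}$ and $s_t\neq s_{\floor*{\frac{n}{2}}+t+1}$) are the right ones, and since only the quasi property is required, accidental overlaps between non-adjacent vertices are indeed harmless. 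Up to this point nothing is wrong.

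The gap is that the one step carrying all of the mathematical content --- the existence, for every $n\geq 4$, of a word satisfying the gluing equations with all $n$ windows distinct --- is asserted rather than proved. In your de Bruijn formulation this is the claim that $B(4,t-1)$ contains two edge-disjoint closed trails through a common vertex $w_0$ of the exact lengths $\floor*{\frac{n}{2}}+1$ and $\ceil*{\frac{n}{2}}-1$; ``the standard ear/merging argument'' does not dispose of it, because splicing ears gives you closed trails of \emph{some} lengths, while controlling both prescribed lengths simultaneously through the same base vertex is precisely what must be shown. Your insistence on the minimal $t$ with $4^t\geq n$ makes this as tight as possible: when $n=4^t$ (e.g.\ $n=16$, $t=2$) the two trails must together use \emph{every} edge of $B(4,t-1)$, so you are claiming an Eulerian circuit can be re-threaded to return to $w_0$ at exactly time $\floor*{\frac{n}{2}}+1$ --- true, but it needs an actual argument, and you concede as much when you say the proof would ultimately need ``an explicit patterned word plus a short case analysis'' that you never supply. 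Note also that nothing in the theorem requires $t$ minimal: the cited construction in \cite{singgih}, like every construction in this paper, takes the label length generously (on the order of $\frac{n}{2}$) and writes down explicit blocks of repeated symbols, which removes the tightness entirely. As it stands your submission is a plan for a proof --- an appealing one, since it would yield labels of length $O(\log n)$ rather than $O(n)$ --- but the construction on which it hinges is missing.
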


\n Finding the construction of an $(\alpha,k)$-labeling of random given digraphs is NP-complete \cite{NP}. Working on the line digraph using quasi-$(\alpha,k)$-labeling reduces the complexity since finding a Eulerian (instead of Hamiltonian) path can be done in polynomial time. However, there were no further results ever published for other basic families of non self-adjoint digraphs which line digraph is bigger than themselves, such as dicycle with multiple chords or ladder digraphs.\\[-2mm]

\n In Section 3, 4, and 5 we provides the construction of quasi-$(\alpha,k)$-labeling for dicycles with $\floor*{\frac{n}{3}}$ chords, $\infty$-digraph $C_n \cdot C_p$, and 3-blade-propeller $C_n \cdot C_p \cdot C_q$, respectively. Applying Corollary \ref{cor: repeat}, their $m\textsuperscript{th}$ line digraph are DNA graph for any positive integer $m$. In Section 4 we also show the relation between the line digraphs of a certain $\infty$-digraph to ladder graphs. For convenience, in the constructions we use the notation $\{f_i\}_{i=1}^n$ to denotes the set $\{f_1,f_2,\ldots,f_n\}$. We also omits the parentheses and commas on the vertex labels in all tables and figures to save space, e.g., use label $123$ instead of $(1,2,3)$.

\section{Dicycle with $\boldsymbol{\floor*{\frac{n}{3}}}$ chords}

\n In this paper we work on dicycle with $\floor*{\frac{n}{3}}$ chords $^*C_n$ with $V(^*C_n)=\{v_i\}_{i=i}^n$ and chords set $C=\{v_{i-2}v_{i}\}_{i=3}^{n-t}$ when $n\equiv t \bmod 3$. This way $\abs{C}=\floor*{\frac{n}{3}}$ and the distance between the head and the tail of each chord is 2.

\begin{theorem}
$L^m\left(^*C_n\right)$ is a DNA graph only for $4\leq n \leq 14$ and positive integer $m$.
\end{theorem}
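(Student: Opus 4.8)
The plan is to read ``DNA graph'' as ``induced subdigraph of the de~Bruijn digraph $B(4,k)$ for some $k$'' (vertices are the $4$-ary words of length $k$, with an arc exactly when one is the shift of the other), and to split the statement into a construction for $4\le n\le 14$ and an impossibility argument for $n\ge 15$, the latter being the hard half.

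\noindent For the construction I would invoke Corollary~\ref{cor: repeat}: it suffices to produce a quasi-$(4,2)$-labeling of $^*C_n$ itself, for then every $L^m(^*C_n)$ is a DNA graph. Writing the labels as pairs, the dicycle arcs $v_iv_{i+1}$ force the windowed form $v_i=(c_i,c_{i+1})$ for a cyclic word $c_1c_2\cdots c_n$ over $\{1,2,3,4\}$, so that condition~(2) becomes ``all consecutive pairs $(c_i,c_{i+1})$ are distinct''. Each chord $v_{3j-2}v_{3j}$ then forces $c_{3j-1}=c_{3j}$, i.e.\ a \emph{diagonal} label $(x,x)$ on $v_{3j-1}$; distinctness forces the $\floor*{\frac{n}{3}}$ diagonals to be pairwise different, and since only the four diagonals $(1,1),\dots,(4,4)$ exist this is possible exactly when $\floor*{\frac{n}{3}}\le 4$, i.e.\ $n\le 14$. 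For each residue of $n$ modulo $3$ in the range $4\le n\le 14$ I would then exhibit an explicit word $c_1\cdots c_n$ meeting both constraints (a short, finite case check), completing this direction.

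\noindent For $n\ge 15$ the first ingredient is a length-independent version of the count: in \emph{any} quasi-$(4,j)$-labeling the dicycle again forces windowed labels $v_i=(a_i,\dots,a_{i+j-1})$, and the chord at $\ell$ forces the run $a_{3\ell-1}=a_{3\ell}=\cdots=a_{3\ell+j-2}$, so $v_{3\ell-1}$ carries the constant window $(x_\ell,\dots,x_\ell)$; distinctness makes the $x_\ell$ pairwise different, which is impossible once $\floor*{\frac{n}{3}}\ge 5$. Hence $^*C_n$ is quasi-$(4,j)$-labeled for no $j$ when $n\ge 15$ (and, since a constant window is a loop of $B(4,j)$ while $^*C_n$ is loopless, it is not genuinely labelable either). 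To transfer this to the line digraphs I would prove a converse of the line-digraph theorem: \emph{if $L(H)$ is genuinely $(4,k)$-labeled and $H$ is strongly connected, then $H$ is quasi-$(4,k-1)$-labeled}. One labels a vertex $x$ of $H$ by the common $(k-1)$-prefix of the labels of its out-arcs, which by the de~Bruijn property along $wx,xy$ equals the common $(k-1)$-suffix of its in-arcs; strong connectivity (which $^*C_n$ and all its line digraphs enjoy) gives well-definedness, and the \emph{induced} hypothesis gives injectivity, because $\mu(x)=\mu(x')$ would make the label of an in-arc of $x'$ and that of an out-arc of $x$ a shift-pair, hence a genuine arc out of $x'$, forcing $x=x'$.

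\noindent The main obstacle is turning this one-step descent into a statement about $L^m$. A single application reaches only $L^{m-1}(^*C_n)$, and the descended labeling is merely \emph{quasi}, so the injectivity trick (which used genuineness) does not obviously survive a second step; moreover $L^{m-1}(^*C_n)$ may well be quasi-labelable, so one cannot stop there. I therefore expect to need the all-at-once form: project the genuine labeling of $L^m(^*C_n)$ through length-$m$ walks to a quasi-$(4,k-m)$-labeling of $^*C_n$, proving well-definedness and injectivity directly from the induced property and strong connectivity. One must also bookkeep the length $k$: if $k-m\ge 2$ this contradicts the constant-window count above, the borderline $k-m=1$ contradicts $\abs{V(^*C_n)}=n\ge 15$ (a quasi-$(4,1)$-labeling admits at most four vertices), and the remaining regime $k\le m$ has to be excluded separately by a lower bound on the length of any genuine labeling of $L^m(^*C_n)$, which I would try to extract from the two parallel directed paths of lengths $m+1$ and $m+2$ that each chord contributes between a fixed pair of cycle-vertices. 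Making the projection and this length bookkeeping airtight is, I expect, where essentially all of the work lies.
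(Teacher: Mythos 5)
Your proposal and the paper's proof share the same skeleton, and everything you actually carry out is also what the paper carries out. For the positive half the paper does exactly what you describe: it derives the windowed form of any quasi-labeling along the dicycle, shows each chord forces the intermediate vertex to carry a constant label $(x,x,\ldots,x)$, concludes from distinctness of labels that at most $4$ chords (hence $n\leq 14$) are compatible with $\alpha\leq 4$, and then exhibits explicit quasi-$(4,3)$-labelings for $6\leq n\leq 14$ (Table \ref{tab: chords}), quoting the one-chord result (Theorem \ref{th: 1ch}) for $n=4,5$, before invoking Corollary \ref{cor: repeat}. Your deviations here are cosmetic: you use $k=2$ windows instead of the paper's $k=3$ (legitimate, since quasi-$(4,2)$ means $k-1=2$, $k=3>2$ in Corollary \ref{cor: repeat}, and the required cyclic words do exist for every $4\leq n\leq 14$), and you defer the finite case check that the paper does in a table.

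The genuine divergence is everything in your last two paragraphs, and here the comparison cuts the other way: the paper contains no descent or projection argument at all. Its treatment of the word ``only'' consists of the constant-label count applied to $^*C_n$ itself, followed by the assertion that the theorem ``follows from Corollary \ref{cor: repeat}.'' Since that corollary gives only the sufficiency direction (quasi-labelable $\Rightarrow$ every $L^m$ is a DNA graph), the paper is tacitly using the converse --- precisely the statement you isolate --- without proving or citing it. Your one-step lemma is sound as sketched (well-definedness of the prefix/suffix label from positive in- and out-degree, the quasi property from overlaps, injectivity from the ``iff'' in the genuine labeling), and it already settles $m=1$, which is more than the paper's own proof does. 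Your worries about general $m$ are also well-founded: projecting through length-$m$ walks yields only $(k-m)$-overlaps, too short to trigger the induced deBruijn property, so injectivity of the projected labeling is not free, and the regime $k\leq m$ needs a separate length bound. But none of that work, nor any substitute for it, appears in the paper. So, measured against the paper's own proof, your proposal is not missing anything the paper supplies; it is the paper's argument plus a partially executed repair of a logical step the paper passes over silently.
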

\begin{proof}
We want the construction of a quasi-$(\alpha,k)$-labeling for $^*C_n$ where $\alpha\leq 4$. The theorem then follows from Corollary \ref{cor: repeat}. First observe that the label for the vertices located between the tail and the head of a chord, $\{v_2,v_5,\ldots,v_{n-t-1}\}$ where $n\equiv t \bmod 3$, must be a repetition of the same number. WLOG for $v_1,v_2,v_3$: suppose $l(v_1)=abA$ then $l(v_2)=bAc$ and $l(v_3)=Acd$ for some $a,b,c,d\in \{1,2,3,4\}$ and $A$ is a string of numbers in $\{1,2,3,4\}$ with any positive length. Since there is a chord from $v_1$ to $v_3$, we must have $bA=Ac$, which only true when $b=c=$ all numbers in $A$. Since $\alpha\leq 4$, $l(v_2)$ is one of the $k$-length label $(1,1,\ldots,1),(2,2,\ldots,2),(3,3,\ldots,3)$, or $(4,4,\ldots,4)$. Hence in general there can only be at most 4 vertices located in between the head and tail of a chord. Since all labels must be distinct, there are at most 4 chords. Since dicycle with 1 chord already discussed in \cite{singgih}, we are left to show valid constructions for $6\leq n \leq 14$.\\
\n One of the valid construction using $k=3$ is given in Table \ref{tab: chords}.
\end{proof}

\vspace{-3mm}
\begin{table}[h!]
\centering
\renewcommand{\arraystretch}{1.3}
\small
\begin{tabular}{|c|l|}
\hline
$n$ & $l(v_1),l(v_2),\ldots,l(v_n)$ \\ \hline
6   & 211,111,112,122,222,221       \\ \hline
7   & 311,111,112,122,222,223,231   \\ \hline
8   & 311,111,112,122,222,223,233,331  \\ \hline
9   & 311,111,112,122,222,223,233,333,331  \\ \hline
10  & 211,111,112,122,222,223,233,333,332,321 \\ \hline
11  & 211,111,112,122,222,223,233,333,332,322,221 \\ \hline
12  & 411,111,112,122,222,223,233,333,334,344,444,441  \\ \hline
13  & 211,111,112,122,222,223,233,333,334,344,444,442,421  \\ \hline
14  & 211,111,112,122,222,223,233,333,334,344,444,442,422,221 \\ \hline
\end{tabular}
\caption{\small Quasi-$(4,3)$-labelings of $^*C_n$ for $6\leq n \leq 14$.}
\label{tab: chords}
\end{table}

\vspace{-3mm}
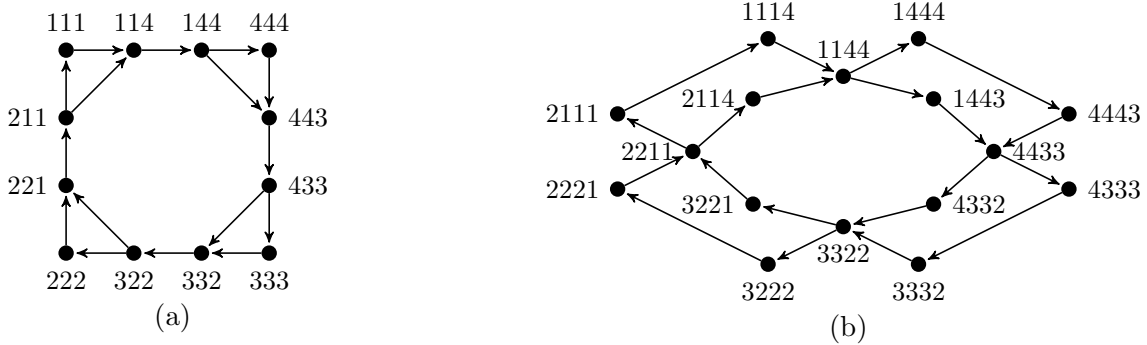
\begin{figure}[h]
\begin{subfigure}{.4\textwidth}
\centering
\begin{tikzpicture}[->,>=stealth',shorten >=1pt,auto, semithick,scale=0.9]
\tikzstyle{every state}=[draw,circle,thick, fill=black, minimum size=5pt,
                            inner sep=0pt]
	\node[state] (1)  at (0,2) [label={left:{\footnotesize ${211}$}}]{ };
	\node[state] (2)  at (0,3) [label={above:{\footnotesize ${111}$}}]{ };
	\node[state] (3)  at (1,3) [label={above:{\footnotesize ${114}$}}]{ };
	\node[state] (4)  at (2,3) [label={above:{\footnotesize ${144}$}}]{ };
	\node[state] (5)  at (3,3) [label={above:{\footnotesize ${444}$}}]{ };
	\node[state] (6)  at (3,2) [label={right:{\footnotesize ${443}$}}]{ };
	\node[state] (7)  at (3,1) [label={right:{\footnotesize ${433}$}}]{ };
	\node[state] (8)  at (3,0) [label={below:{\footnotesize ${333}$}}]{ };
	\node[state] (9)  at (2,0) [label={below:{\footnotesize ${332}$}}]{ };
	\node[state] (10) at (1,0) [label={below:{\footnotesize ${322}$}}]{ };
	\node[state] (11) at (0,0) [label={below:{\footnotesize ${222}$}}]{ };
	\node[state] (12) at (0,1) [label={left:{\footnotesize ${221}$}}]{ };
	\path (1)  edge node { }(2)  edge node { }(3)   
 	      (2)  edge node { }(3)
 	      (3)  edge node { }(4)   
 	      (4)  edge node { }(5)  edge node { }(6)
 	      (5)  edge node { }(6)   
 	      (6)  edge node { }(7)
 	      (7)  edge node { }(8)  edge node { }(9)  
 	      (8)  edge node { }(9)
 	      (9)  edge node { }(10)   
 	      (10) edge node { }(11) edge node { }(12)
 	      (11) edge node { }(12)
 	      (12) edge node { }(1) ;
\end{tikzpicture}    
\vspace{-2mm}
\caption{ }
\end{subfigure} 
\begin{subfigure}{.6\textwidth}
\centering
\begin{tikzpicture}[->,>=stealth',shorten >=1pt,auto, semithick]
\tikzstyle{every state}=[draw,circle,thick, fill=black, minimum size=5pt,
                            inner sep=0pt]
	\node[state] (12)   at (0,2)     [label={left:{\footnotesize ${2111}$}}]{ };
	\node[state] (23)   at (2,3)     [label={above:{\footnotesize ${1114}$}}]{ };
	\node[state] (34)   at (3,2.5)   [label={above:{\footnotesize ${1144}$}}]{ };
	\node[state] (45)   at (4,3)     [label={above:{\footnotesize ${1444}$}}]{ };
	\node[state] (56)   at (6,2)     [label={right:{\footnotesize ${4443}$}}]{ };
	\node[state] (67)   at (5,1.5)   [label={right:{\footnotesize ${4433}$}}]{ };
	\node[state] (78)   at (6,1)     [label={right:{\footnotesize ${4333}$}}]{ };
	\node[state] (89)   at (4,0)     [label={below:{\footnotesize ${3332}$}}]{ };
	\node[state] (910)  at (3,0.5)   [label={below:{\footnotesize ${3322}$}}]{ };
	\node[state] (1011) at (2,0)     [label={below:{\footnotesize ${3222}$}}]{ };
	\node[state] (1112) at (0,1)     [label={left:{\footnotesize ${2221}$}}]{ };
	\node[state] (121)  at (1,1.5)   [label={left:{\footnotesize ${2211}$}}]{ };
	\node[state] (13)   at (2,2)     [label={left:{\footnotesize ${2114}$}},yshift=2mm,xshift=-2mm]{ };
	\node[state] (46)   at (4,2)     [label={right:{\footnotesize ${1443}$}},yshift=2mm,xshift=2mm]{ };
	\node[state] (79)   at (4,1)     [label={right:{\footnotesize ${4332}$}},yshift=-2mm,xshift=2mm]{ };
	\node[state] (1012) at (2,1)     [label={left:{\footnotesize ${3221}$}},yshift=-2mm,xshift=-2mm]{ };
	
	\path (12)   edge node { }(23)   
 	      (23)   edge node { }(34)
 	      (34)   edge node { }(45) edge node { }(46) 
 	      (45)   edge node { }(56)
 	      (56)   edge node { }(67)   
 	      (67)   edge node { }(78) edge node { }(79)
 	      (78)   edge node { }(89)   
 	      (89)   edge node { }(910)
 	      (910)  edge node { }(1011) edge node { }(1012)  
 	      (1011) edge node { }(1112)
 	      (1112) edge node { }(121)
 	      (121)  edge node { }(12) edge node { }(13)
 	      (13)   edge node { }(34)
 	      (46)   edge node { }(67)
 	      (79)   edge node { }(910)
 	      (1012) edge node { }(121);   
\end{tikzpicture}    
\vspace{-2mm}
\caption{ }
\end{subfigure}   
\caption{\small (a) A quasi-$(4,3)$-labeling for $^*C_{12}$ and (b) the corresponding $(4,4)$-labeling for $L\left(^*C_{12}\right)$.}
\label{fig: chords}
\end{figure}

\section{$\boldsymbol{\infty}$-digraph $\boldsymbol{C_n\cdot C_p}$}

\n An $\infty$\textit{-graph\textit{•}} is defined as two cycles joined at a vertex \cite{liu}. In this paper we use the notation $C_n \cdot C_p$ to represent an $\infty$\textit{-digraph} that is obtained by joining two dicycles $C_n$ and $C_p$ at a vertex. Since $L^{m+1}\left(C_n\cdot C_p\right)$ has more arcs and no less vertices than $L^{m}\left(C_n\cdot C_p\right)$ for any positive integers $n,p$ and $m$, Corollary \ref{cor: repeat} is usefully applicable to this graph family.

\vspace{3mm}
\begin{theorem}
\label{th: kupueven}
$C_n\cdot C_p$ has a quasi-$\left(4,\frac{n}{2}+1\right)$-labeling for any even $n\geq 4$, $n\leq p \leq \frac{5n}{2}+3$, and positive integer $m$.
\end{theorem}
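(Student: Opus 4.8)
The plan is to reduce the existence of a quasi-$\left(4,\frac{n}{2}+1\right)$-labeling to a purely combinatorial statement about cyclic words. Write $k=\frac{n}{2}+1$ and let $w_0$ be the vertex at which the two dicycles are glued; in $C_n\cdot C_p$ this $w_0$ has in- and out-degree $2$, while every other vertex has in- and out-degree $1$. A quasi-$(4,k)$-labeling assigns to each vertex a length-$k$ word over $\{1,2,3,4\}$ so that consecutive vertices overlap in $k-1$ coordinates. Since property (3) is only required in the forward direction for quasi-labelings, the overlap (arc) condition is automatic once each dicycle is presented as the sequence of length-$k$ windows of a cyclic word, and $w_0$ automatically acquires in-/out-degree $2$ where the two words meet. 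Consequently the whole problem collapses to exhibiting a cyclic word $S$ of length $n$ and a cyclic word $T$ of length $p$ over $\{1,2,3,4\}$ whose length-$k$ windows are all distinct and whose window sets intersect in exactly the junction window $W^\ast$ carried by $w_0$.

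For the $C_n$ part I take the balanced two-run word $S=1^{n/2}2^{n/2}$, which uses only the symbols $1,2$. Each of its $k=\frac{n}{2}+1$ windows has the form $1^a2^b$ or $2^a1^b$ with $a+b=k$, and is determined by its leading symbol together with the length of its leading run; hence the $n$ windows are pairwise distinct. I designate the junction window to be $W^\ast=1^{\,k-1}2$, the unique window whose leading run uses all $k-1$ available $1$'s. The feature to carry forward is that \emph{every} window of $S$ is a $\{1,2\}$-word.

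For the $C_p$ part I set $T=1^{\,k-1}\,2\,w$, where $w$ is a word of length $p-k$ over the two fresh symbols $\{3,4\}$. Then $W^\ast$ occurs in $T$ exactly once, because $T$ has only one run of $k-1$ consecutive $1$'s, and every other window of $T$ meets the block $w$ and so contains a $3$ or a $4$; this instantly forces $S$ and $T$ to share only $W^\ast$. To see that $T$'s own windows are distinct I sort them into four groups: the window $W^\ast$; the left-boundary windows that start inside the $1$-block and contain the single $2$, separated by the length of their leading $1$-run; the wrap-around windows that end in a run of $1$'s and contain no $2$, separated by the length of their trailing $1$-run and distinct from the previous group because only one group contains a $2$; and the interior windows lying wholly inside $w$. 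The first three groups are pairwise distinct for free, so the single genuine requirement is that the length-$k$ factors of $w$ be pairwise distinct.

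It then remains to choose $w$ of the right length for every $p$ with $n\le p\le\frac{5n}{2}+3$. When $p\le 2k-1$ the word $w$ is shorter than $k$, so $T$ has no interior windows and distinctness is immediate. For larger $p$ note that $\frac{5n}{2}+3=5k-2$ forces $|w|=p-k\le 4k-2$, and that a linear de Bruijn word of order $k$ over $\{3,4\}$ has length $2^k+k-1\ge 4k-2$ (since $3k-1\le 2^k$ for $k\ge 3$, and $k\ge 3$ as $n\ge 4$) with all length-$k$ factors distinct; taking $w$ to be its length-$(p-k)$ prefix realizes every intermediate length one symbol at a time, prefixes inheriting the distinct-factor property. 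I expect the main obstacle to be precisely this step carried out \emph{explicitly and uniformly in $n$}: in the paper's style one wants a transparent run-pattern for $w$ whose $k$-factors stay distinct exactly up to length $4k-2$, which is where the bound $\frac{5n}{2}+3$ originates, together with a careful verification of the boundary-window bookkeeping in the short-$w$ regime where the left- and wrap-around groups overlap. Once $S$ and $T$ are fixed, the resulting labeling is a valid quasi-$(4,k)$-labeling, and Corollary \ref{cor: repeat} upgrades it to the conclusion that $L^m\!\left(C_n\cdot C_p\right)$ is a DNA graph for every positive integer $m$.
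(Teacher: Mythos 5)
Your proposal is correct, and it takes a genuinely different route from the paper. The paper proves the theorem by exhibiting closed-form labels only for the maximal case $p=\frac{5n}{2}+3$ (its $C_p$ labels march through blocks of $1$'s, $2$'s, $3$'s, $4$'s, back to $3$'s, then $1$'s) and then handles all smaller $p$ by an informal vertex-merging argument, finally arguing no vertex can be appended beyond the maximum. You instead give one uniform construction for every $p$ in the range: $C_n$ carries the windows of $1^{k-1}2^{k-1}$ and $C_p$ the windows of $1^{k-1}2\,w$, with $w$ a prefix of a binary de Bruijn word over the fresh symbols $\{3,4\}$. Your route has two concrete advantages. First, cross-cycle distinctness is automatic, since every non-junction window of your $T$ contains a $3$ or a $4$; by contrast, the paper's displayed formulas assign the identical label (a run of $\frac{n}{2}-i+2$ ones followed by $i-1$ twos) to both $v_i$ and $u_i$ for every $3\le i\le\frac{n}{2}$, so the paper's stated construction actually violates the distinctness requirement for all even $n\ge 6$; notably, the paper's own Figure \ref{fig: butteven} (small $p$) follows your pattern $1^{k-1}2\,33\cdots3$ rather than those formulas, and your fresh-alphabet device is exactly what removes the defect. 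Second, your argument needs no merging step, and it works for all $p\le 2^k+2k-1$, which strictly exceeds $\frac{5n}{2}+3$ once $n\ge 6$; this shows the paper's upper bound reflects its particular construction rather than an intrinsic obstruction (harmless here, since the theorem only asserts existence in the stated range). What the paper buys in exchange is fully explicit labels with no appeal to de Bruijn sequences, plus a heuristic for where the constant $\frac{5n}{2}+3$ comes from. The bookkeeping you flagged does close cleanly: any window of $T$ containing the unique $2$ is determined by the position of that $2$, and any wrap-around window is determined by the length of its trailing run of $1$'s, so the only genuine requirement is distinctness of the $k$-factors of $w$, which the de Bruijn prefix supplies; hence your proof is complete as outlined.
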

\begin{proof}
Let the vertex set of $C_n\cdot C_p$ be $\{v_i\}_{i=1}^n \cup \{u_i\}_{i=1}^p$ where $v_2=u_2$ is the shared vertex.\\
Let the arc set be $\{v_iv_{i+1}\}_{i=1}^{n-1}\cup \{u_iu_{i+1}\}_{i=1}^{p-1} \cup \{v_nv_1,u_pu_1\}$.\\
\n Let the quasi-$\left(4,\frac{n}{2}+1\right)$-labeling for $C_n\cdot C_p$ defined as follows:\\[-7mm]
\begin{align*}
l(v_i)&=\begin{cases} 
(\underbrace{1,1,\ldots,1}_{\frac{n}{2}-i+2},\underbrace{2,2,\ldots,2}_{i-1}) & \text{if }1\leq i \leq \frac{n}{2} \\
(1,\underbrace{2,2,\ldots,2}_{\frac{n}{2}-1},1) & \text{if } i=\frac{n}{2}+1 \\ (\underbrace{2,2,\ldots,2}_{n-i+1},\underbrace{1,1,\ldots,1}_{i-\frac{n}{2}}) & \text{if }\frac{n}{2}+2\leq i \leq n \end{cases}
\end{align*}
\n For $l(u_i)$, first observe the case when $p=\frac{5n}{2}+3$:\\[-7mm]
\begin{align*}
l(u_i) &=\begin{cases}
(3,\underbrace{1,1,\ldots,1}_{\frac{n}{2}}) & \text{if } i=1\\
(\underbrace{1,1,\ldots,1}_{\frac{n}{2}-i+2},\underbrace{2,2,\ldots,2}_{i-1}) &\text{if } 2\leq i\leq \frac{n}{2}+1 \\
(\underbrace{2,2,\ldots,2}_{n-i+3},\underbrace{3,3,\ldots,3}_{i-\frac{n}{2}-2}) & \text{if } \frac{n}{2}+2\leq i \leq n+2\\
(\underbrace{3,3,\ldots,3}_{\frac{3n}{2}-i+3},\underbrace{4,4,\ldots,4}_{i-n-2}) &\text{if } n+3\leq i \leq \frac{3n}{2}+3 \\
(\underbrace{4,4,\ldots,4}_{2n-i+4},\underbrace{3,3,\ldots,3}_{i-\frac{3n}{2}-3}) &\text{if } \frac{3n}{2}+4\leq i \leq 2n+4\\
(\underbrace{3,3,\ldots,3}_{\frac{5n}{2}-i+5},\underbrace{1,1,\ldots,1}_{i-2n-4}) &\text{if } 2n+5\leq i \leq \frac{5n}{2}+3\\
\end{cases}
\end{align*}
\n This way we have $l(v_2)=(\underbrace{1,1,\ldots,1}_{\frac{n}{2}},2)=l(u_2)$ and all vertices have distinct label.\\
It is also immediately follows that $\alpha=4$ and $k=\frac{n}{2}+1$, and deBruijn property holds.\\
\n Adding more vertices to $C_p$ while preserving deBruijn property will force the existence of vertex label of the form $(\underbrace{2,2,\ldots,2}_{j},\underbrace{1,1,\ldots,1}_{k-j})$ for some $1\leq j \leq k$. However, this form of vertex label is already used to label some vertices in $C_n$, so the distinctive property of the quasi labeling will be violated. Hence no more vertex can be added to $C_p$ and we have $p\leq \frac{5n}{2}+3$.\\
\n Next observe that some vertices can be omitted by merging some vertices. For example, the vertex with sequence of labels 122, 222, 223 can be merged into 122, 223, and further to 123. Hence we can omits vertex one by one until we reach the minimum number of vertices in $C_p$ that is determined by the fixed $l(u_2)$. Since $l(u_2)=(\underbrace{1,1,\ldots,1}_{\frac{n}{2}},2)$, the shortest possible sequence of vertex labels is  
$$(3,\underbrace{1,1,\ldots,1}_{k-1}),(\underbrace{1,1,\ldots,1}_{k-1},2),(\underbrace{1,1,\ldots,1}_{k-2},2,3),(\underbrace{1,1,\ldots,1}_{k-3},2,3,1),$$
$$(\underbrace{1,1,\ldots,1}_{k-4},2,3,1,1),\ldots,(1,2,3,\underbrace{1,1,\ldots,1}_{k-3}), (2,3,\underbrace{1,1,\ldots,1}_{k-2})$$

\n The number ``3" in the labels above can be replaced with ``4", but the use of ``1" and ``2" as well as the existence of ``3" (or ``4") are forced by $l(u_2)$ and the distinctive property of the quasi labeling.\\
\n There are $k+1=\ceil*{\frac{n}{2}}+2$ labels (vertices) in above sequence, and since $\ceil*{\frac{n}{2}}+2\leq n$ for all $n \geq 4$, we have that quasi-$\left(4,\frac{n}{2}+1\right)$-labeling exists for all $n\leq p \leq \frac{5n}{2}+3$.
\end{proof}

\n In Figure \ref{fig: butteven} the dicycle is short enough so $\alpha=3$ is sufficient to label all vertices. Labeling in (a) obtained from labeling in (b) by merging the vertices labeled 2333, 3333, 3331 into 2333, 3331.

\begin{figure}[h]
\begin{subfigure}{.5\textwidth}
\centering
\begin{tikzpicture}[->,>=stealth',shorten >=1pt,auto, semithick,scale=1.2]
\tikzstyle{every state}=[draw,circle,thick, fill=black, minimum size=5pt,
                            inner sep=0pt]
	\node[state] (1112)  at (0:1)   
	             [label={right:{\footnotesize ${1112}$}}]{ };
	\node[state] (1111)  at (60:1)  
	             [label={above:{\footnotesize ${1111}$}}]{ };
	\node[state] (2111)  at (120:1) 
	             [label={above:{\footnotesize ${2111}$}}]{ };
	\node[state] (2211)  at (180:1) 
	             [label={left:{\footnotesize ${2211}$}}]{ };
	\node[state] (1221)  at (240:1) 
	             [label={below:{\footnotesize ${1221}$}}]{ };
	\node[state] (1122)  at (300:1) 
	             [label={below:{\footnotesize ${1122}$}}]{ };
	\node[state,xshift=2.8cm] (3331)  at (180-3*51.4:1.2) 
	                          [label={right:{\footnotesize ${3331}$}}]{ };
	\node[state,xshift=2.8cm] (3311)  at (180-2*51.4:1.2) 
	                          [label={above:{\footnotesize ${3311}$}}]{ };
	\node[state,xshift=2.8cm] (3111)  at (180-51.4:1.2) 
	                          [label={above:{\footnotesize ${3111}$}}]{ };
	\node[state,xshift=2.8cm] (1123)  at (180+51.4:1.2) 
	                          [label={below:{\footnotesize ${1123}$}}]{ };
	\node[state,xshift=2.8cm] (1233)  at (180+2*51.4:1.2) 
	                          [label={below:{\footnotesize ${1233}$}}]{ };
	\node[state,xshift=2.8cm] (2333)  at (180+3*51.4:1.2) 
	                          [label={right:{\footnotesize ${2333}$}}]{ };
	
	\path (1112) edge node { } (1122) edge node { } (1123)
	      (1122) edge node { } (1221)
	      (1221) edge node { } (2211)
	      (2211) edge node { } (2111)
	      (2111) edge node { } (1111)
	      (1111) edge node { } (1112)
	      (1123) edge node { } (1233)
	      (1233) edge node { } (2333)
	      (2333) edge node { } (3331)
	      (3331) edge node { } (3311)
	      (3311) edge node { } (3111)
	      (3111) edge node { } (1112);
\end{tikzpicture}    
\vspace{-2mm}
\caption{ }
\end{subfigure} 
\begin{subfigure}{.5\textwidth}
\centering
\begin{tikzpicture}[->,>=stealth',shorten >=1pt,auto, semithick,scale=1.2]
\tikzstyle{every state}=[draw,circle,thick, fill=black, minimum size=5pt,
                            inner sep=0pt]
	\node[state] (1112)  at (0:1)   
	             [label={right:{\footnotesize ${1112}$}}]{ };
	\node[state] (1111)  at (60:1)  
	             [label={above:{\footnotesize ${1111}$}}]{ };
	\node[state] (2111)  at (120:1) 
	             [label={above:{\footnotesize ${2111}$}}]{ };
	\node[state] (2211)  at (180:1) 
	             [label={left:{\footnotesize ${2211}$}}]{ };
	\node[state] (1221)  at (240:1) 
	             [label={below:{\footnotesize ${1221}$}}]{ };
	\node[state] (1122)  at (300:1) 
	             [label={below:{\footnotesize ${1122}$}}]{ };
	\node[state,xshift=2.8cm] (3331)  at (180-3*45:1.2) 
	                          [label={right:{\footnotesize ${3331}$}}]{ };
	\node[state,xshift=2.8cm] (3311)  at (180-2*45:1.2) 
	                          [label={above:{\footnotesize ${3311}$}}]{ };
	\node[state,xshift=2.8cm] (3111)  at (180-45:1.2) 
	                          [label={above:{\footnotesize ${3111}$}}]{ };
	\node[state,xshift=2.8cm] (1123)  at (180+45:1.2) 
	                          [label={below:{\footnotesize ${1123}$}}]{ };
	\node[state,xshift=2.8cm] (1233)  at (180+2*45:1.2) 
	                          [label={below:{\footnotesize ${1233}$}}]{ };
	\node[state,xshift=2.8cm] (2333)  at (180+3*45:1.2) 
	                          [label={right:{\footnotesize ${2333}$}}]{ };
	\node[state,xshift=2.8cm] (3333)  at (180+4*45:1.2) 
	                          [label={right:{\footnotesize ${3333}$}}]{ };
	                          
	\path (1112) edge node { } (1122) edge node { } (1123)
	      (1122) edge node { } (1221)
	      (1221) edge node { } (2211)
	      (2211) edge node { } (2111)
	      (2111) edge node { } (1111)
	      (1111) edge node { } (1112)
	      (1123) edge node { } (1233)
	      (1233) edge node { } (2333)
	      (2333) edge node { } (3333)
	      (3333) edge node { } (3331)
	      (3331) edge node { } (3311)
	      (3311) edge node { } (3111)
	      (3111) edge node { } (1112);
\end{tikzpicture}     
\vspace{-2mm}
\caption{ }
\end{subfigure}   
\vspace{-2mm}
\caption{\small (a) A Quasi-$(4,4)$-labeling of $C_6\cdot C_7$ and (b) A Quasi-$(4,4)$-labeling of $C_6\cdot C_8$.}
\label{fig: butteven}
\end{figure}
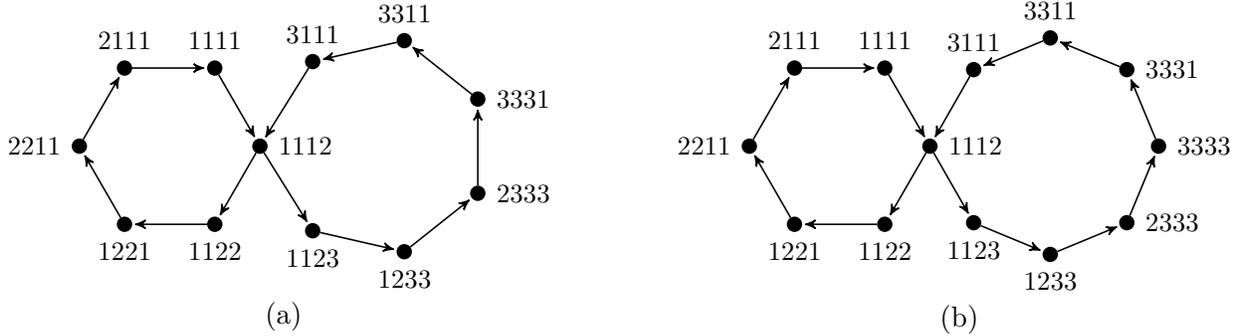

\vspace{3mm}
\begin{theorem}
\label{th: kupuodd}
$C_n\cdot C_p$ has a quasi-$\left(4,\ceil*{\frac{n}{2}}+1\right)$-labeling for any odd $n> 4$, $n\leq p \leq 5\ceil*{\frac{n}{2}}+3$, and positive integer $m$.
\end{theorem}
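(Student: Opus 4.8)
The plan is to mirror the proof of Theorem~\ref{th: kupueven}, replacing $\frac n2$ everywhere by $\ceil*{\frac n2}$ and setting $k=\ceil*{\frac n2}+1$. I would keep the same vertex and arc sets with $v_2=u_2$ the shared vertex, and label the dicycle $C_n$ by the $k$-windows of the cyclic word $1^{k}2^{\,n-k}$ (equivalently, the odd counterpart of the piecewise formula displayed in Theorem~\ref{th: kupueven}). This produces an all-ones vertex, the forced shared label $l(v_2)=(\underbrace{1,\ldots,1}_{\ceil*{n/2}},2)$, a pair of turn-around vertices (in contrast to the single one needed in the even case), and a descent back through $2$'s and $1$'s. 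The deBruijn overlap is automatic for windows of a cyclic word, so the only thing to check for $C_n$ is that the windows are distinct, which holds exactly when the block of $2$'s is nonempty, i.e. $n-k=\tfrac{n-3}2\ge 1$. This is the first place the hypothesis $n>4$ is used: for $n=3$ the word collapses to $1^{k}$ and all windows coincide.

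I would then extend the labeling to $C_p$ at the maximal length $p=5\ceil*{\frac n2}+3$ by continuing the cyclic word through the symbols $1\to 2\to 3\to 4\to 3\to 1$ and anchoring the return at $u_1=(3,\underbrace{1,\ldots,1}_{\ceil*{n/2}})$, with the six run lengths chosen as the odd analogues of the even construction so that the total length is $5\ceil*{\frac n2}+3$. Here I would verify, in order: the shared-vertex identity $l(u_2)=l(v_2)$; the overlap at each internal position and, more delicately, at the five block boundaries, where the parity shifts every run length by one relative to the even case and the consecutive labels must still agree on $k-1$ coordinates; and the closing overlap $u_p\to u_1$. Global injectivity is the real content: since the labels of $C_n$ use only the symbols $1,2$, the plan is to let the branch diverge at the junction so that every $C_p$-label other than the shared one already contains a $3$ or a $4$ — the symbol $3$ is introduced immediately after $u_2$, as in Figure~\ref{fig: butteven} — which removes the need to compare the two cycles label by label.

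For the two bounds on $p$ I would reuse the even arguments. For the upper bound, extending $C_p$ past $5\ceil*{\frac n2}+3$ while preserving the deBruijn property forces a label of the form $(\underbrace{2,\ldots,2}_{j},\underbrace{1,\ldots,1}_{k-j})$ of the kind already carried by a vertex of $C_n$, violating distinctness. For the lower bound I would merge adjacent vertices one at a time down to the shortest sequence compatible with the fixed value of $l(u_2)$; that sequence has $\ceil*{\frac n2}+2$ labels, and since $\ceil*{\frac n2}+2\le n$ holds exactly for $n\ge 5$, the whole range $n\le p\le 5\ceil*{\frac n2}+3$ is attainable for odd $n>4$. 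This inequality is a second, independent appearance of the hypothesis $n>4$.

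I expect the main obstacle to be the boundary bookkeeping in the $C_p$ construction together with the injectivity at the junction: because $n$ is odd the runs of the cyclic word no longer split evenly, so the five block-to-block overlaps, the closure at $u_1$, and the guarantee that no $C_p$-label away from the junction reduces to a pure $\{1,2\}$-string already used by $C_n$, each have to be checked against the off-by-one run lengths. Once the explicit piecewise labeling is written out, all of these reduce to routine arithmetic, and the statement for $L^m\!\left(C_n\cdot C_p\right)$ then follows from Corollary~\ref{cor: repeat}.
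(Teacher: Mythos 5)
Your plan for $C_n$ is fine and agrees with the paper (the windows of the cyclic word $1^{k}2^{n-k}$ are exactly the displayed piecewise formula), but the $C_p$ half of your proposal contains a genuine contradiction, and it sits precisely where you located ``the real content.'' You ask for two things at once: (i) the six run lengths are the odd analogues of Theorem~\ref{th: kupueven}, so that the cyclic word for $C_p$ is $3\,1^{c}\,2^{c+1}\,3^{c}\,4^{c+1}\,3^{c}$ with $c=\ceil*{\frac{n}{2}}$ and total length $5c+3$; and (ii) the symbol $3$ appears immediately after the shared vertex, so that every non-shared $C_p$-label contains a $3$ or a $4$. These cannot both hold: in the word of (i) the label following the shared $(\underbrace{1,\ldots,1}_{c},2)$ is $(\underbrace{1,\ldots,1}_{c-1},2,2)$, and $C_p$ carries every label $(\underbrace{1,\ldots,1}_{c-i+2},\underbrace{2,\ldots,2}_{i-1})$ for $2\le i\le c+1$. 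Since $C_n$ carries these same labels for $1\le i\le c-1$, the distinct vertices $u_i\neq v_i$ with $3\le i\le c-1$ receive equal labels as soon as $c\ge 4$, i.e.\ for every odd $n\ge 7$, so property (2) fails and claim (ii) is simply false for this word. (Incidentally, this is also a flaw in the paper's own proof, which uses exactly this word and asserts distinctness without verification; the figures you appeal to show the merged, short-$p$ labelings, not the maximal one, so they do not witness the maximal construction.)

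If you instead enforce (ii) --- a $2$-run of length one, as in Figures~\ref{fig: butteven} and \ref{fig: DNAbutt} --- then injectivity against $C_n$ is indeed automatic, but the length claim collapses: for a cyclic word $1^{c}\,2\,3^{a}\,4^{b}\,3^{c'}$ all windows of length $k=c+1$ are distinct only if $a\le k$, $b\le k$, $c'\le k$, and not both $3$-runs have length $k$ (otherwise the all-$3$ window occurs twice), whence $p=c+1+a+b+c'\le c+3k=4c+3$. So the top $c$ values of the claimed range $n\le p\le 5c+3$ are unreachable with the six-run shape you describe. To prove the theorem as stated along your lines, the $\{3,4\}$-tail must be allowed to alternate between $3$'s and $4$'s more than once --- any word over $\{3,4\}$ whose length-$k$ substrings are distinct can serve as the tail (e.g.\ a linear de Bruijn word, whose length $2^{k}+k-1$ exceeds the required $4c+2$ for all $c\ge 3$) --- and then the block-boundary checks, the injectivity argument, and the merging argument for intermediate $p$ all have to be redone for that genuinely different construction.
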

\begin{proof}
Let the vertex set of $C_n\cdot C_p$ be $\{v_i\}_{i=1}^n \cup \{u_i\}_{i=1}^p$ where $v_2=u_2$ is the shared vertex.\\
Let the arc set be $\{v_iv_{i+1}\}_{i=1}^{n-1}\cup \{u_iu_{i+1}\}_{i=1}^{p-1} \cup \{v_nv_1,u_pu_1\}$.\\
\n Let the quasi-$(4,\ceil*{\frac{n}{2}}+1)$-labeling $l$ for $C_n\cdot C_p$ defined as follows:\\[-7mm]
\begin{align*}
l(v_i)&=\begin{cases} 
(\underbrace{1,1,\ldots,1}_{\ceil*{\frac{n}{2}}-i+2},\underbrace{2,2,\ldots,2}_{i-1}) & \text{if }1\leq i \leq \ceil*{\frac{n}{2}}-1 \\
(1,1,\underbrace{2,2,\ldots,2}_{\ceil*{\frac{n}{2}}-2},1) & \text{if }i=\ceil*{\frac{n}{2}}   \\
(1,\underbrace{2,2,\ldots,2}_{\ceil*{\frac{n}{2}}-2},1,1) & \text{if }i=\ceil*{\frac{n}{2}}+1   \\
(\underbrace{2,2,\ldots,2}_{n-i+1},\underbrace{1,1,\ldots,1}_{i-\ceil*{\frac{n}{2}}+1}) & \text{if }\ceil*{\frac{n}{2}}+2\leq i \leq n \end{cases}
\end{align*}
\n For $l(u_i)$, first observe the case when $p=5\ceil*{\frac{n}{2}}+3$:\\[-7mm]
\begin{align*}
l(u_i)&=\begin{cases}
(3,\underbrace{1,1,\ldots,1}_{\ceil*{\frac{n}{2}}}) & \text{if } i=1\\
(\underbrace{1,1,\ldots,1}_{\ceil*{\frac{n}{2}}-i+2},\underbrace{2,2,\ldots,2}_{i-1}) &\text{if } 2\leq i\leq \ceil*{\frac{n}{2}}+1 \\
(\underbrace{2,2,\ldots,2}_{n-i+4},\underbrace{3,3,\ldots,3}_{i-\ceil*{\frac{n}{2}}-2}) & \text{if } \ceil*{\frac{n}{2}}+2\leq i \leq n+3\\
(\underbrace{3,3,\ldots,3}_{3\ceil*{\frac{n}{2}}-i+3},\underbrace{4,4,\ldots,4}_{i-n-3}) &\text{if } n+4\leq i \leq 3\ceil*{\frac{n}{2}}+3 \\
(\underbrace{4,4,\ldots,4}_{2n-i+6},\underbrace{3,3,\ldots,3}_{i-3\ceil*{\frac{n}{2}}-3}) &\text{if } 3\ceil*{\frac{n}{2}}+4\leq i \leq 2n+6\\
(\underbrace{3,3,\ldots,3}_{5\ceil*{\frac{n}{2}}-i+5},\underbrace{1,1,\ldots,1}_{i-2n-6}) &\text{if } 2n+7\leq i \leq 5\ceil*{\frac{n}{2}}+3\\
\end{cases}
\end{align*}
\n This way we have $l(v_2)=(\underbrace{1,1,\ldots,1}_{\ceil*{\frac{n}{2}}},2)=l(u_2)$ and all vertices have distinct label.\\
It is also immediately follows that $\alpha=4$ and $k=\ceil*{\frac{n}{2}}+1$, and deBruijn property holds.\\
\n Using similar arguments as in Theorem \ref{th: kupueven}, we have the labeling exists for $n\leq p \leq \frac{5n}{2}+3$.
\end{proof}

\vspace{3mm}
\begin{cor}
$L^m(C_n \cdot C_p)$ are DNA graphs for $n\geq 4$, $n\leq p \leq 5\ceil*{\frac{n}{2}}+3$, and $m\in \mathbb{Z}^+$.
\end{cor}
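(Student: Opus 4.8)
The plan is to obtain this corollary as an immediate synthesis of Theorems~\ref{th: kupueven} and~\ref{th: kupuodd} with Corollary~\ref{cor: repeat}: all of the constructive work has already been carried out in building the two quasi-labelings, so what remains is only to check that their hypotheses assemble into the single statement above and that the stated range of $p$ is exactly the union of the even and odd ranges.

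First I would settle the parity case split. For $n\geq 4$ every integer is either even with $n\geq 4$, covered by Theorem~\ref{th: kupueven}, or odd with $n\geq 5>4$, covered by Theorem~\ref{th: kupuodd}; in particular there is no missing odd value at $n=4$ because $4$ is even, so the two hypotheses jointly cover all $n\geq 4$ with no gap. Thus for every such $n$ one of the theorems supplies a quasi-$(4,k_0)$-labeling of $C_n\cdot C_p$ with $k_0=\frac{n}{2}+1$ when $n$ is even and $k_0=\ceil*{\frac{n}{2}}+1$ when $n$ is odd, both of which I would record uniformly as $k_0=\ceil*{\frac{n}{2}}+1$.

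Next I would reconcile the range of $p$. For even $n$ we have $\ceil*{\frac{n}{2}}=\frac{n}{2}$, so $5\ceil*{\frac{n}{2}}+3=\frac{5n}{2}+3$ matches the upper endpoint in Theorem~\ref{th: kupueven}, while for odd $n$ the expression $5\ceil*{\frac{n}{2}}+3$ is already the endpoint in Theorem~\ref{th: kupuodd}; hence the interval $n\leq p\leq 5\ceil*{\frac{n}{2}}+3$ of the corollary is precisely where a quasi-$(4,k_0)$-labeling is guaranteed. Finally I would feed each labeling into Corollary~\ref{cor: repeat}: taking its parameter ``$k-1$'' to be $k_0$, so that $k=k_0+1$, and noting $\alpha=4\leq 4$, I only have to verify $k>2$, i.e.\ $k_0\geq 2$, which holds since $k_0=\ceil*{\frac{n}{2}}+1\geq 3$ for all $n\geq 4$; the corollary then delivers that $L^m(C_n\cdot C_p)$ is a DNA graph for every $m\in\mathbb{Z}^+$. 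The only thing requiring care---the main \emph{obstacle}, such as it is---is keeping the ceiling arithmetic consistent across the two parities and handling the off-by-one in the $k$-parameter of Corollary~\ref{cor: repeat}; there is no genuine mathematical difficulty, since the hard content lives entirely in Theorems~\ref{th: kupueven} and~\ref{th: kupuodd}.
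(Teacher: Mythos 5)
Your proposal is correct and matches the paper's proof exactly: the paper's argument is the single line ``Apply Corollary~\ref{cor: repeat} to Theorem~\ref{th: kupueven} and Theorem~\ref{th: kupuodd},'' and your write-up simply makes explicit the routine checks (parity coverage for $n\geq 4$, agreement of the two upper bounds on $p$ via $\ceil*{\frac{n}{2}}=\frac{n}{2}$ for even $n$, and the condition $k>2$ in Corollary~\ref{cor: repeat}) that the paper leaves implicit.
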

\begin{proof}
Apply Corollary \ref{cor: repeat} to Theorem \ref{th: kupueven} and Theorem \ref{th: kupuodd}.
\end{proof}

\vspace{-2mm}
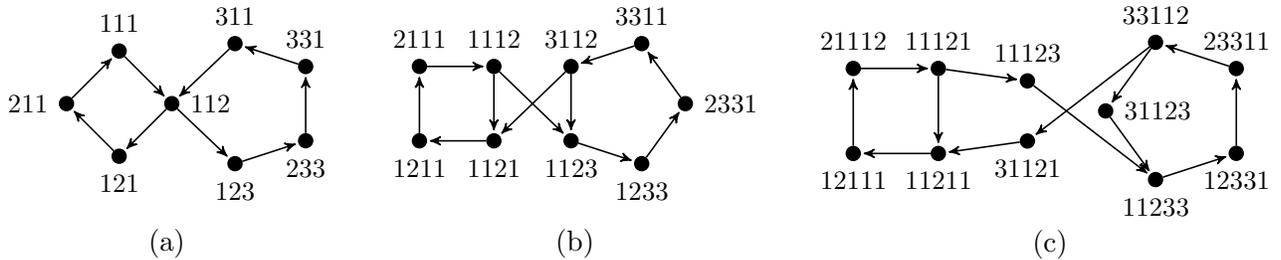
\begin{figure}[h]
\captionsetup{justification=centering}
\begin{subfigure}{.3\textwidth}
\centering
\begin{tikzpicture}[->,>=stealth',shorten >=1pt,auto, semithick,scale=0.7]
\tikzstyle{every state}=[draw,circle,thick, fill=black, minimum size=5pt,
                            inner sep=0pt]
	\node[state] (112)  at (0:1)   
	             [label={right:{\footnotesize ${112}$}}]{ };
	\node[state] (111)  at (90:1)  
	             [label={above:{\footnotesize ${111}$}}]{ };
	\node[state] (211)  at (180:1) 
	             [label={left:{\footnotesize ${211}$}}]{ };
	\node[state] (121)  at (270:1) 
	             [label={below:{\footnotesize ${121}$}}]{ };
	\node[state,xshift=1.8cm] (331)  at (180-2*72:1.2) 
	                          [label={above:{\footnotesize ${331}$}}]{ };
	\node[state,xshift=1.8cm] (311)  at (180-72:1.2) 
	                          [label={above:{\footnotesize ${311}$}}]{ };
	\node[state,xshift=1.8cm] (123)  at (180+72:1.2) 
	                          [label={below:{\footnotesize ${123}$}}]{ };
	\node[state,xshift=1.8cm] (233)  at (180+2*72:1.2) 
	                          [label={below:{\footnotesize ${233}$}}]{ };
	
	\path (112) edge node { } (121) edge node { } (123)
	      (121) edge node { } (211)
	      (211) edge node { } (111)
	      (111) edge node { } (112)
	      (123) edge node { } (233)
	      (233) edge node { } (331)
	      (331) edge node { } (311)
	      (311) edge node { } (112);
\end{tikzpicture} 
\caption{ }
\end{subfigure}   
\begin{subfigure}{.3\textwidth}
\centering
\begin{tikzpicture}[->,>=stealth',shorten >=1pt,auto, semithick,scale=0.7]
\tikzstyle{every state}=[draw,circle,thick, fill=black, minimum size=5pt,
                            inner sep=0pt]
	\node[state] (1112)  at (45:1)   
	             [label={above:{\footnotesize ${1112}$}}]{ };
	\node[state] (2111)  at (135:1)  
	             [label={above:{\footnotesize ${2111}$}}]{ };
	\node[state] (1211)  at (225:1) 
	             [label={below:{\footnotesize ${1211}$}}]{ };
	\node[state] (1121)  at (315:1) 
	             [label={below:{\footnotesize ${1121}$}}]{ };
	\node[state,xshift=2.2cm] (2331)  at (0:1.2) 
	                          [label={right:{\footnotesize ${2331}$}}]{ };
	\node[state,xshift=2.2cm] (3311)  at (72:1.2) 
	                          [label={above:{\footnotesize ${3311}$}}]{ };
	\node[state,xshift=2.2cm] (3112)  at (2*72:1.2) 
	                          [label={above:{\footnotesize ${3112}$}}]{ };
	\node[state,xshift=2.2cm] (1123)  at (3*72:1.2) 
	                          [label={below:{\footnotesize ${1123}$}}]{ };
	\node[state,xshift=2.2cm] (1233)  at (4*72:1.2) 
	                          [label={below:{\footnotesize ${1233}$}}]{ };	
	
	\path (1112) edge node { } (1121) edge node { } (1123)
	      (1121) edge node { } (1211)
	      (1211) edge node { } (2111)
	      (2111) edge node { } (1112)
	      (1123) edge node { } (1233)
	      (1233) edge node { } (2331)
	      (2331) edge node { } (3311)
	      (3311) edge node { } (3112)
	      (3112) edge node { } (1123) edge node { } (1121)  ;
\end{tikzpicture} 
\caption{ }
\end{subfigure}   
\begin{subfigure}{.4\textwidth}
\centering
\begin{tikzpicture}[->,>=stealth',shorten >=1pt,auto, semithick,scale=0.8]
\tikzstyle{every state}=[draw,circle,thick, fill=black, minimum size=5pt,
                            inner sep=0pt]
	\node[state] (21112)  at (135:1)   
	             [label={above:{\footnotesize ${21112}$}}]{ };
	\node[state] (12111)  at (225:1)  
	             [label={below:{\footnotesize ${12111}$}}]{ };
	\node[state] (11211)  at (315:1) 
	             [label={below:{\footnotesize ${11211}$}}]{ };
	\node[state] (11121)  at (45:1) 
	             [label={above:{\footnotesize ${11121}$}}]{ };
	
	\node[state,xshift=1.75cm] (11123)  at (90:0.5) 
	                          [label={above:{\footnotesize ${11123}$}}]{ };
	\node[state,xshift=1.75cm] (31121)  at (270:0.5) 
	                          [label={below:{\footnotesize ${31121}$}}]{ };
	                          
	\node[state,xshift=3.75cm] (23311)  at (180-2*72:1.2) 
	                          [label={above:{\footnotesize ${23311}$}}]{ };
	\node[state,xshift=3.75cm] (33112)  at (180-72:1.2) 
	                          [label={above:{\footnotesize ${33112}$}}]{ };
	\node[state,xshift=3.75cm] (31123)  at (180:1.2) 
	                          [label={right:{\footnotesize ${31123}$}}]{ };                          
	\node[state,xshift=3.75cm] (11233)  at (180+72:1.2) 
	                          [label={below:{\footnotesize ${11233}$}}]{ };
	\node[state,xshift=3.75cm] (12331)  at (180+2*72:1.2) 
	                          [label={below:{\footnotesize ${12331}$}}]{ };
	
	\path (11121) edge node { } (11211) edge node { } (11123)
	      (11211) edge node { } (12111)
	      (12111) edge node { } (21112)
	      (21112) edge node { } (11121)
		  (11123) edge node { } (11233)
		  (11233) edge node { } (12331)
		  (12331) edge node { } (23311)
		  (23311) edge node { } (33112)
		  (33112) edge node { } (31123) edge node { } (31121)
		  (31123) edge node { } (11233)	      
	      (31121) edge node { } (11211);
\end{tikzpicture} 
\vspace{-2mm}
\caption{ }
\end{subfigure}   
\vspace{-2mm}
\caption{\small (a) A Quasi-$(4,3)$-labeling of $C_4\cdot C_5$ and the corresponding (b) $(4,4)$-labeling of $L(C_4\cdot C_5)$ and (c) $(4,5)$-labeling of $L^2(C_4\cdot C_5)$. Digraphs $L(C_4\cdot C_5)$ and $L^2(C_4\cdot C_5)$ are DNA graphs.}
\label{fig: DNAbutt}
\end{figure}

\vspace{-1mm}
\begin{theorem}
$L^m(C_3 \cdot C_p)$ are DNA graphs for $4\leq p \leq 13$ and $m\in \mathbb{Z}^+$.
\label{th: kupu3}
\end{theorem}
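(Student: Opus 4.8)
The plan is to construct, for each $p$ with $4\le p\le 13$, a quasi-$(4,3)$-labeling of $C_3\cdot C_p$ and then to invoke Corollary \ref{cor: repeat} (here $\alpha\le 4$ and $k=3>2$) to conclude that $L^m(C_3\cdot C_p)$ is a DNA graph for every $m\in\mathbb{Z}^+$. Note that $k=3=\ceil*{\frac{3}{2}}+1$, so this is the $n=3$ instance of Theorem \ref{th: kupuodd}, and the reason it must be handled separately is that the formula for $l(v_i)$ in Theorem \ref{th: kupuodd} degenerates when $n=3$: the case $1\le i\le \ceil*{\frac{n}{2}}-1$ and the case $i=\ceil*{\frac{n}{2}}$ both collapse to $l(v_1)=l(v_2)=111$, violating distinctness. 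Thus the small dicycle needs a bespoke labeling, after which the large-dicycle part of the construction can be salvaged.

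First I would label the triangle $C_3$. With $k=3$ the deBruijn property forces every label on $C_3$ to be periodic with period $3$, so the three labels are the cyclic rotations of a single string; choosing that string non-constant (e.g.\ taking $l(v_1),l(v_2),l(v_3)$ to be $211,112,121$) makes them pairwise distinct while keeping the overlaps valid around the triangle, and I fix the shared vertex label $l(v_2)=l(u_2)=112$. The key point is that, although the $l(v_i)$ part of Theorem \ref{th: kupuodd} degenerates, the $l(u_i)$ part does \emph{not}: substituting $n=3$ (so $\ceil*{\frac{n}{2}}=2$) into its six cases yields $13$ well-defined, pairwise distinct length-$3$ labels $311,112,122,222,223,233,334,344,444,443,433,333,331$ forming a closed deBruijn walk with $l(u_2)=112$. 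Reusing this walk for $C_{13}$ realizes $p=5\ceil*{\frac{3}{2}}+3=13$, and one checks directly that the only triangle label occurring among the $u_i$ is the shared $112$, so all labels of $C_3\cdot C_{13}$ are distinct, $\alpha=4$, and the deBruijn property holds.

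To obtain the remaining values I would shorten the large dicycle one vertex at a time via the merging operation of Theorem \ref{th: kupueven} (replacing a run such as $122,222,223$ by $123$), which preserves the quasi-labeling and decreases $p$ by one, sweeping out every $p$ from $13$ down to the minimum. The lower endpoint is $p=4$ rather than $p=3$: if $p=3$ the large cycle would also have to carry three period-$3$ rotations through the fixed label $112$, namely $112,121,211$, which are exactly the labels already used on $C_3$, so distinctness fails and $C_3\cdot C_3$ admits no such labeling (equivalently, the minimum large-cycle length $\ceil*{\frac{n}{2}}+2=4$ exceeds $n=3$). Applying Corollary \ref{cor: repeat} to each of these quasi-$(4,3)$-labelings then completes the proof. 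The main obstacle is precisely the degeneracy at $n=3$: one cannot simply cite Theorem \ref{th: kupuodd}, and the separately built triangle labeling must be verified to remain globally distinct from the reused $u_i$ labels while matching at the shared vertex; the secondary subtlety is pinning down both endpoints of the range $4\le p\le 13$ through the collision arguments above.
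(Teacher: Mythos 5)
Your proposal is correct and takes essentially the same route as the paper: the paper's proof likewise handles the degenerate triangle with the bespoke labels $l(v_1),l(v_2),l(v_3)=211,112,121$, reuses the large-cycle $u_i$-labeling (together with its merging argument giving the range $4\leq p\leq 13$) from the $\infty$-digraph construction, and concludes via Corollary \ref{cor: repeat}. The only cosmetic difference is that the paper imports the $u_i$-labels from Theorem \ref{th: kupueven} with $n=4$ while you substitute $n=3$ into Theorem \ref{th: kupuodd}; since both instances have $k=3$, these yield the identical sequence $311,112,122,\ldots,333,331$.
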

\begin{proof}
Use the construction in the proof of Theorem \ref{th: kupueven} for $n=4$, but change the labeling for $C_n$ into $l(v_1),l(v_2),l(v_3)=211,112,121$.
\end{proof}

\vfill
\n Since DNA spectrum obtained from an experiment can have random length, the spectrum are normally chopped into relatively small pieces before the sequencing process. Therefore it is preferable to have a labeling construction with relatively shorter length. Theorem \ref{th: kupukupu} shows that a quasi-$(4,k)$-labeling with the shorter length of $k=\ceil*{\frac{n}{2}}$ is possible for $C_n\cdot C_n$.\\[-2mm]

\pagebreak
\begin{theorem}
\label{th: kupukupu}
$L^m\left(C_n\cdot C_n\right)$ are DNA graph for any $n\geq 3$ and positive integer $m$. 
\end{theorem}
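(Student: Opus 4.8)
The plan is to construct an explicit quasi-$(4,\ell)$-labeling of $C_n\cdot C_n$ with $\ell=\ceil*{\frac{n}{2}}$ using only the three symbols $\{1,2,3\}$, and then to apply Corollary~\ref{cor: repeat}. Since $\ell\ge 2$ for every $n\ge 3$, this labeling has length $\ell$, so in the notation of Corollary~\ref{cor: repeat} we have $k-1=\ell$ and $k=\ell+1>2$; the corollary then promotes the single labeling to the statement for every $L^m$ at once, so no separate induction on $m$ is required.

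The construction rests on the standard correspondence between a dicycle $C_n$ and a cyclic word of length $n$: fixing such a word and letting each vertex be labeled by the length-$\ell$ window read from that vertex onward, consecutive windows overlap as required, so the deBruijn implication holds automatically along every arc of the cycle. Writing $1^a$ for the all-ones string of length $a$ (and similarly $2^b,3^c$), I would label the first cycle by the balanced word $W_1=1^{\ell}2^{\floor*{\frac{n}{2}}}$ and the second by $W_2=3\,1^{\ell-1}\,2\,3^{\floor*{\frac{n}{2}}-1}$, aligned so that in both words the window read at the shared vertex $v_2=u_2$ is the common string $1^{\ell-1}2$. This is the mechanism of Theorem~\ref{th: kupueven}, but trimming the leading run of ones by one coordinate is exactly what lowers the length from $\ceil*{\frac{n}{2}}+1$ to $\ceil*{\frac{n}{2}}$; since the two cycles now have equal length, $W_2$ needs only the single marker symbol $3$ and never $4$.

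With labels defined as windows, properties (1) and (3) come almost for free: only $\{1,2,3\}$ occur, and every arc of $C_n\cdot C_n$ is a consecutive pair $v_iv_{i+1}$ or $u_iu_{i+1}$ inside one of the two cyclic words --- including the four arcs meeting the branch vertex $v_2=u_2$ --- so the overlap condition is inherited from the window structure once one checks that the shared vertex carries the same string $1^{\ell-1}2$ in both words. The whole weight of the argument then falls on distinctness, property (2). I would prove it by classifying windows according to which block boundaries of the cyclic word they cross. Because $\ell=\ceil*{\frac{n}{2}}$ is too short for a window to straddle both boundaries of $W_1$ at once, each window of $W_1$ has one of the two shapes $1^p2^{\ell-p}$ or $2^p1^{\ell-p}$, and one counts exactly $n$ such strings, pairwise distinct; the same analysis gives $n$ pairwise-distinct windows of $W_2$. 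The observation that fuses the two cycles with no spurious collision is that the only window of $W_2$ containing no symbol $3$ is the shared string $1^{\ell-1}2$, whereas every window of $W_1$ is a pure $\{1,2\}$-string; hence the two label sets meet in exactly the single vertex $v_2=u_2$, giving $2n-1$ distinct labels for the $2n-1$ vertices of $C_n\cdot C_n$.

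The step I expect to be most delicate is the parity bookkeeping inside this count. When $n$ is even, $W_1$ contributes an all-twos window $2^{n/2}$ and $W_2$ an all-threes window, while for odd $n$ neither extreme window appears; the enumerations of the $1^p2^{\ell-p}$ and $2^p1^{\ell-p}$ families for $W_1$ (and of the $3$-marked families for $W_2$) must therefore be run separately in each parity to confirm that the counts total exactly $n$ and that distinct families do not overlap. Once $W_1$ and $W_2$ are spelled out in the piecewise format of Theorems~\ref{th: kupueven} and~\ref{th: kupuodd}, every remaining verification reduces to comparing strings by their leading-run lengths, after which Corollary~\ref{cor: repeat} yields the conclusion for all $n\ge 3$ and all $m\in\mathbb{Z}^+$.
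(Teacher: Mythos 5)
Your proposal is correct and is essentially the paper's own proof: the length-$\lceil n/2\rceil$ windows of your cyclic words $W_1=1^{\ell}2^{\lfloor n/2\rfloor}$ and $W_2=3\,1^{\ell-1}\,2\,3^{\lfloor n/2\rfloor-1}$ are exactly the paper's piecewise-defined labels $l(v_i)$ and $l(u_i)$ (a quasi-$(3,\lceil n/2\rceil)$-labeling agreeing on the shared vertex $1^{\ell-1}2$), followed by the same appeal to Corollary~\ref{cor: repeat}. The only differences are presentational: your window formulation treats $n=3$ uniformly (reproducing the paper's separately stated labeling $11,12,21$ and $31,12,23$), and you spell out the distinctness count that the paper merely asserts.
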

\begin{proof}
Let the vertex set of the dicycles be $\{v_i\}_{i=1}^n \cup \{u_i\}_{i=1}^n$ where $v_2=u_2$ is the shared vertex. Let the arc set be $\{v_iv_{i+1},u_iu_{i+1}\}_{i=1}^{n-1}\cup \{v_nv_1,u_nu_1\}$.\\
For $n=3$, let $l(v_1),l(v_2),l(v_3)=11,12,21$ and $l(u_1),l(u_2),l(u_3)=31,12,23$.\\
For $n\geq 4$, let the quasi-$\left(3,\ceil*{\frac{n}{2}}\right)$-labeling $l$ for $C_n\cdot C_n$ defined as follows:\\[-7mm]
\begin{align*}
    l(v_i)&=\begin{cases} (\underbrace{1,1,\ldots,1}_{\ceil{\frac{n}{2}}-i+1},\underbrace{2,2,\ldots,2}_{i-1}) & \text{if }1\leq i \leq \ceil*{\frac{n}{2}} \\ (\underbrace{2,2,\ldots,2}_{n-i+1},\underbrace{1,1,\ldots,1}_{i-\ceil{\frac{n}{2}}}) & \text{if }\ceil{\frac{n}{2}}+1\leq i \leq n  \text{ and }n \text{ is odd}\\
(\underbrace{2,2,\ldots,2}_{n-i+1},\underbrace{1,1,\ldots,1}_{i-\frac{n}{2}-1}) & \text{if }\frac{n}{2}+1\leq i \leq n  \text{ and }n \text{ is even}
    \end{cases}\\
    l(u_i)&=\begin{cases} (3,\underbrace{1,1,\ldots,1}_{\ceil*{\frac{n}{2}}-1}) & \text{if }i=1 \\ (\underbrace{1,1,\ldots,1}_{\ceil{\frac{n}{2}}-i+1},2,\underbrace{3,3,\ldots,3}_{i-2}) & \text{if } 2\leq i \leq \ceil*{\frac{n}{2}}+1 \\ (\underbrace{3,3,\ldots,3}_{n-i+2},\underbrace{1,1,\ldots,1}_{i-\ceil{\frac{n}{2}}-1}) & \text{if } \ceil*{\frac{n}{2}}+2 \leq i \leq n \text{ and }n \text{ is odd}\\
 (\underbrace{3,3,\ldots,3}_{n-i+2},\underbrace{1,1,\ldots,1}_{i-\frac{n}{2}-2}) & \text{if } \frac{n}{2}+2 \leq i \leq n \text{ and }n \text{ is even}   
\end{cases}
\end{align*}
This way we have $l(v_2)=(\underbrace{1,1,\ldots,1}_{\ceil*{\frac{n}{2}}-1},2)=l(u_2)$ and all vertices have distinct label.\\
It is also immediately follows that $\alpha=3$ and $k=\ceil*{\frac{n}{2}}$, and deBruijn property holds.
\end{proof}

\vspace{3mm}
\n Observe that $L(C_4\cdot C_4)\cong P_2 \Box P_4$, which is a ladder digraph as shown in Figure \ref{fig: ladder}.\\[-6mm] 

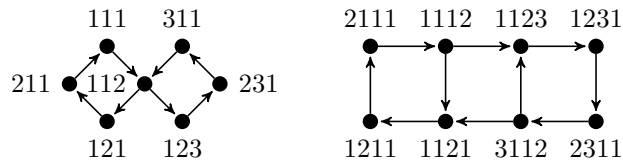
\begin{figure}[h]
\centering
\begin{tikzpicture}[->,>=stealth',shorten >=1pt,auto, semithick, scale=0.5]
\tikzstyle{every state}=[draw,circle,thick, fill=black, minimum size=5pt,
                            inner sep=0pt]
	\node[state] (0)  at (0,1) [label={left:{\footnotesize ${211}$}}]{ };
	\node[state] (10) at (1,0) [label={below:{\footnotesize ${121}$}}]{ };
	\node[state] (11) at (1,2) [label={above:{\footnotesize ${111}$}}]{ };
	\node[state] (2)  at (2,1) [label={left:{\footnotesize ${112}$}}]{ };
	\node[state] (30) at (3,0) [label={below:{\footnotesize ${123}$}}]{ };
	\node[state] (31) at (3,2) [label={above:{\footnotesize ${311}$}}]{ };
	\node[state] (4)  at (4,1) [label={right:{\footnotesize ${231}$}}]{ };
	
	\path (0)  edge node { }(11)   
 	      (10) edge node { }(0)
 	      (11) edge node { }(2)
 	      (2)  edge node { }(30) edge node { }(10)  
 	      (30) edge node { }(4)
 	      (4)  edge node { }(31)
 	      (31) edge node { }(2)	;
	
	\node[state] (50) at (8,0) [label={below:{\footnotesize ${1211}$}}]{ };
	\node[state] (51) at (8,2) [label={above:{\footnotesize ${2111}$}}]{ };
	\node[state] (60) at (10,0) [label={below:{\footnotesize ${1121}$}}]{ };
	\node[state] (61) at (10,2) [label={above:{\footnotesize ${1112}$}}]{ };
	\node[state] (70) at (12,0) [label={below:{\footnotesize ${3112}$}}]{ };
	\node[state] (71) at (12,2) [label={above:{\footnotesize ${1123}$}}]{ };
	\node[state] (80) at (14,0) [label={below:{\footnotesize ${2311}$}}]{ };
	\node[state] (81) at (14,2) [label={above:{\footnotesize ${1231}$}}]{ };
	
	\path (50) edge node { }(51)
	      (51) edge node { }(61)
	      (61) edge node { }(71) edge node { }(60)
	      (71) edge node { }(81)
	      (81) edge node { }(80)
	      (80) edge node { }(70)
	      (70) edge node { }(60) edge node { }(71)
	      (60) edge node { }(50);
\end{tikzpicture}    
\vspace{-2mm}
\caption{\small A Quasi-$(3,3)$-labeling of $C_4\cdot C_4$ and the corresponding $(3,4)$-labeling of $L(C_4\cdot C_4)\cong P_2 \Box P_4$.}
\label{fig: ladder}
\end{figure}

\n Taking $P_2\Box P_3$ as an induced subgraph of $P_2\Box P_4$ and preserving the labeling, we have ladder digraph $P_2\Box P_3$ is a DNA graph, with one of the corresponding $(3,4)$-labeling given in Figure \ref{fig: laddercil} (left), while the $(3,4)$-labeling of $P_2\Box P_5$ is given in Figure \ref{fig: laddercil} (right).\\[-6mm]

\begin{figure}[h]
\centering
\begin{tikzpicture}[->,>=stealth',shorten >=1pt,auto, semithick, scale=0.5]
\tikzstyle{every state}=[draw,circle,thick, fill=black, minimum size=5pt,
                            inner sep=0pt]
	\node[state] (00) at (0,0) [label={below:{\footnotesize ${1211}$}}]{ };
	\node[state] (01) at (0,2) [label={above:{\footnotesize ${2111}$}}]{ };
	\node[state] (10) at (2,0) [label={below:{\footnotesize ${1121}$}}]{ };
	\node[state] (11) at (2,2) [label={above:{\footnotesize ${1112}$}}]{ };
	\node[state] (20) at (4,0) [label={below:{\footnotesize ${3112}$}}]{ };
	\node[state] (21) at (4,2) [label={above:{\footnotesize ${1123}$}}]{ };

	\path (00) edge node { }(01)
	      (01) edge node { }(11)
	      (11) edge node { }(21) edge node { }(10)
	      (20) edge node { }(10) edge node { }(21)
	      (10) edge node { }(00);
	      
	\node[state] (40) at (7,0)  [label={below:{\footnotesize ${1213}$}}]{ };
	\node[state] (41) at (7,2)  [label={above:{\footnotesize ${2131}$}}]{ };	
	\node[state] (50) at (9,0)  [label={below:{\footnotesize ${3121}$}}]{ };
	\node[state] (51) at (9,2)  [label={above:{\footnotesize ${1312}$}}]{ };
	\node[state] (60) at (11,0) [label={below:{\footnotesize ${3312}$}}]{ };
	\node[state] (61) at (11,2) [label={above:{\footnotesize ${3123}$}}]{ };
	\node[state] (70) at (13,0) [label={below:{\footnotesize ${2331}$}}]{ };
	\node[state] (71) at (13,2) [label={above:{\footnotesize ${1233}$}}]{ };
	\node[state] (80) at (15,0) [label={below:{\footnotesize ${2233}$}}]{ };
	\node[state] (81) at (15,2) [label={above:{\footnotesize ${2333}$}}]{ };
	
	\path (40) edge node { }(41)
		  (41) edge node { }(51)
		  (51) edge node { }(61) edge node { }(50)
	      (61) edge node { }(71) 
	      (71) edge node { }(81) edge node { }(70)
	      (80) edge node { }(70) edge node { }(81)
	      (70) edge node { }(60) 
	      (60) edge node { }(50) edge node { }(61)     
	      (50) edge node { }(40);
      
\end{tikzpicture} 
\vspace{-2mm}   
\caption{\small A $(3,4)$-labeling of $P_2 \Box P_3$ and a $(3,4)$-labeling of $P_2\Box P_5$.}
\label{fig: laddercil}
\end{figure}

\vspace{-2mm}
\begin{con}
For all integers $n\geq 3$, ladder digraphs $P_2\Box P_n$ are DNA graphs.
\end{con}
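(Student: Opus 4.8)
The plan is to realize each oriented ladder as a line digraph of a much simpler digraph and then to invoke the line-digraph labeling machinery of \cite{li} already used throughout the paper. Writing the vertices of $P_2\Box P_n$ as a top row $a_1,\dots,a_n$ and a bottom row $b_1,\dots,b_n$, the orientation fixed by the identification $L(C_4\cdot C_4)\cong P_2\Box P_4$ is $a_i\to a_{i+1}$ along the top rail, $b_{i+1}\to b_i$ along the bottom rail, and rungs $b_i\to a_i$ for odd $i$ and $a_i\to b_i$ for even $i$. I would first prove that, for this orientation, $P_2\Box P_n$ is itself a line digraph: a digraph is a line digraph exactly when any two vertices have equal or disjoint in-neighbourhoods (and likewise for out-neighbourhoods), and a direct computation shows that the only coincidences are $N^-(a_i)=N^-(b_{i-1})$ for odd $i$ and $N^+(a_i)=N^+(b_{i+1})$ for even $i$, so the condition holds for every $n$.

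The next step is to identify the pre-image $\Gamma_n$ with $L(\Gamma_n)\cong P_2\Box P_n$ explicitly. Tracking the merges above, $\Gamma_n$ is a \emph{path of directed $4$-cycles}: consecutive squares share a single $(2,2)$ cut vertex, there are $\lfloor n/2 \rfloor$ squares, and when $n$ is odd a pendant in-arc and a pendant out-arc are attached at the terminal square (this recovers $\Gamma_3$ = a square with an in- and out-pendant, $\Gamma_4=C_4\cdot C_4$, and the two-square-plus-pendants digraph underlying the $P_2\Box P_5$ labeling of Figure \ref{fig: laddercil}). I would verify this isomorphism once and for all, splitting on the parity of $n$ exactly as the rest of the paper does: the even case gives a strongly connected chain, and the odd case produces the source/sink behaviour visible in Figure \ref{fig: laddercil}.

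It then suffices, by the line-digraph labeling theorem of \cite{li}, to equip $\Gamma_n$ with a quasi-$(\alpha,k)$-labeling for some $\alpha\le 4$: this yields an $(\alpha,k+1)$-labeling of $P_2\Box P_n$, and any $(\alpha,k+1)$-labeling with $\alpha\le 4$ is already a DNA labeling. Since $\Gamma_n$ is a chain of short cycles sharing vertices, I would build such a labeling by generalizing the run-length construction used for the $\infty$-digraph $C_n\cdot C_p$ and the $3$-blade-propeller: give the $j$-th square a block of labels whose run-lengths of a fixed symbol encode $j$, reserve one symbol for the cut-vertex transitions so that the outgoing states of square $j$ match the incoming states of square $j+1$, and treat the odd-$n$ pendants as source/sink tails. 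A clean way to organize this is induction on $t$: assuming a valid quasi-labeling of $\Gamma_{2t}$, append one more $4$-cycle at the free cut vertex while lengthening $k$ by a bounded amount, so that only the new labels must be checked for distinctness and overlap, the base cases $n\le 5$ being the labelings already exhibited in Figures \ref{fig: ladder}–\ref{fig: laddercil}.

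The main obstacle is the step giving a uniform labeling of $\Gamma_n$ under the alphabet budget $\alpha\le 4$. The cut vertices couple adjacent squares: the four arcs at a shared vertex force the two ``outgoing'' length-$(k-1)$ states of one square to coincide with the two ``incoming'' states of the next, so a naive per-square encoding collides, and one must thread a single carry-free counter over at most four symbols through the whole chain while keeping all $\Theta(n)$ labels distinct. This is precisely the reconciliation that was easy for one shared vertex in Theorems \ref{th: kupueven}--\ref{th: kupukupu} but becomes genuinely global here; controlling it (and confirming that the quasi-labeling creates no spurious merges that would corrupt the identification $P_2\Box P_n\cong L(\Gamma_n)$) is where the real work lies. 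If a bounded alphabet proves too rigid, a fallback is a direct construction on the ladder itself, spelling the top and bottom rails as two windowed words that are forced to share length-$(k-1)$ factors at the alternating rung positions, with $k$ taken linear in $n$ to buy enough room for distinctness.
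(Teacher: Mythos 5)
This statement is a \emph{conjecture} in the paper --- the paper offers no proof of it (only the explicit labelings for $n=3,4,5$ in Figures \ref{fig: ladder} and \ref{fig: laddercil}), so your attempt has to be judged on its own terms. Your preparatory steps are correct and verifiable: the orientation you read off from Figure \ref{fig: ladder} (top rail forward, bottom rail backward, rungs alternating) is the one the paper intends; with that orientation the equal-or-disjoint neighbourhood condition does hold, with exactly the coincidences you list, so $P_2\Box P_n$ is a line digraph for every $n$; and its preimage $\Gamma_n$ is indeed a chain of $\lfloor n/2\rfloor$ directed $4$-cycles in which consecutive squares share one vertex (opposite corners of each intermediate square), with an in-pendant and out-pendant when $n$ is odd. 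The reduction --- quasi-$(4,k)$-label $\Gamma_n$, then apply the Li--Zhang theorem (Theorem 2.2, or Corollary \ref{cor: repeat}) --- is also valid as far as it goes.

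The gap is the step you yourself flag as ``where the real work lies,'' and it is not merely hard: it is provably impossible once $n$ is large, because the conjecture you are trying to prove is false for large $n$. The obstruction is the very periodicity argument the paper uses in Theorem 3.1 to force $n\le 14$ for dicycles with $\lfloor n/3\rfloor$ chords, upgraded from period $1$ to period $4$. In any quasi-$(4,k)$-labeling, following the deBruijn overlaps around a directed $4$-cycle $x_0\to x_1\to x_2\to x_3\to x_0$ gives $l_i(x_0)=l_{i+4}(x_0)$ for $1\le i\le k-4$, so every vertex lying on a directed $4$-cycle carries a $4$-periodic label, determined by its first $\min(k,4)$ entries; hence at most $4^4=256$ such labels exist over $\{1,2,3,4\}$, \emph{no matter how large $k$ is}. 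But in this orientation $P_2\Box P_n$ contains the $\lfloor n/2\rfloor$ pairwise vertex-disjoint directed squares $a_{2j-1}\to a_{2j}\to b_{2j}\to b_{2j-1}\to a_{2j-1}$, so $4\lfloor n/2\rfloor$ distinct vertices would all need distinct $4$-periodic labels; for $n\ge 130$ this exceeds $256$, so $P_2\Box P_n$ admits no quasi-$(4,k)$-labeling, a fortiori no $(4,k)$-labeling, and is not a DNA graph. (Sharper counting --- the four labels around a square force primitive period $4$, and disjoint squares must use distinct period-$4$ necklaces, of which there are $(4^4-4^2)/4=60$ --- lowers the threshold to $n\ge 122$.) The same count applies to your $\Gamma_n$, which also contains $\lfloor n/2\rfloor$ directed squares, and it shows your fallback of taking $k$ linear in $n$ ``to buy room'' cannot work: lengthening $4$-periodic words does not increase their number. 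What survives of your plan is the finite range $n\le 121$ or so, where reducing to a chain of at most $60$ squares and generalizing the run-length constructions of Theorems \ref{th: kupueven}--\ref{th: kupukupu} is a reasonable (still unfinished) strategy; but the conjecture as stated should be answered by a refutation for large $n$, not by the induction you outline.
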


\section{3-blade-propeller $\boldsymbol{C_n\cdot C_p \cdot C_q}$}

\n Define a \textit{3-blade-propeller}, denoted by $C_n\cdot C_p \cdot C_q$, as a digraph obtained by joining three dicycles $C_n, C_p$, and $C_q$ at a vertex. The name is to avoid any confusion with a \textit{propeller graph} that defined in \cite{liu}. When $p=q=n$, a 3-blade-propeller is the \textit{windmill} graph $D_n^3$. Since $L^{m+1}\left(C_n\cdot C_p \cdot C_q\right)$ has more arcs and no less vertices than $L^{m}\left(C_n\cdot C_p \cdot C_q\right)$ for any positive integers $n,p$ and $m$, Corollary \ref{cor: repeat} is usefully applicable to this graph family.\\[-2mm]

\begin{theorem}
$L^m\left(D_n^3\right)$ are DNA graph for $n\geq 3$ and positive integer $m$.
\label{th: windmill}
\end{theorem}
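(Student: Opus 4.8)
The plan is to reduce to a quasi-labeling via Corollary \ref{cor: repeat} and then build one by grafting a third ``blade'' onto the $C_n\cdot C_n$ construction of Theorem \ref{th: kupukupu}. Since $L^{m+1}\left(D_n^3\right)$ has strictly more arcs and no fewer vertices than $L^m\left(D_n^3\right)$, it suffices to exhibit a quasi-$\left(4,\ceil*{\frac{n}{2}}\right)$-labeling of the windmill $D_n^3=C_n\cdot C_n\cdot C_n$; the theorem then follows from Corollary \ref{cor: repeat}, whose hypothesis is met because the labeling length $\ceil*{\frac{n}{2}}$ is at least $2$ for every $n\geq 3$.

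Concretely, I would write the vertex set as $\{v_i\}_{i=1}^n\cup\{u_i\}_{i=1}^n\cup\{w_i\}_{i=1}^n$ with $v_2=u_2=w_2$ the common center, and close each cycle with $v_nv_1,u_nu_1,w_nw_1$. For the first two blades I reuse verbatim the labels $l(v_i)$ and $l(u_i)$ from Theorem \ref{th: kupukupu}, which already share the center label $(\underbrace{1,\ldots,1}_{\ceil*{\frac{n}{2}}-1},2)$. For the third blade I copy the $u$-labeling but replace every symbol $3$ by $4$, so that $l(w_1)=(4,\underbrace{1,\ldots,1}_{\ceil*{\frac{n}{2}}-1})$, $l(w_i)=(\underbrace{1,\ldots,1}_{\ceil*{\frac{n}{2}}-i+1},2,\underbrace{4,\ldots,4}_{i-2})$ for $2\leq i\leq \ceil*{\frac{n}{2}}+1$, and the tail obtained by the same symbol swap in the $l(u_i)$ formula (with the same split on the parity of $n$). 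The base case $n=3$ I handle by hand, extending the $n=3$ labels of Theorem \ref{th: kupukupu} with $l(w_1),l(w_2),l(w_3)=41,12,24$.

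Three things must then be checked. First, $\alpha=4$ and $k=\ceil*{\frac{n}{2}}$ are immediate from the formulas. Second, the (forward) deBruijn property: inside the $v$- and $u$-blades this is exactly what Theorem \ref{th: kupukupu} verifies, and the $w$-blade is an isomorphic copy of the $u$-blade under the relabeling $3\leftrightarrow 4$, so its arcs shift correctly as well. The only genuinely new arcs are those incident to the center, and for each of $w_1\to w_2$ and $w_2\to w_3$ a direct one-line shift computation, identical to the $u$-blade check, confirms $l_i(x)=l_{i-1}(y)$. Third, and this is the one step not immediately inherited, the distinctness of labels across the three blades.

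The main obstacle is precisely this cross-blade distinctness, and the observation that makes it routine is that blade $v$ uses only $\{1,2\}$, blade $u$ uses $\{1,2,3\}$, and blade $w$ uses $\{1,2,4\}$; moreover every non-center vertex of the $u$-blade actually contains a $3$ and every non-center vertex of the $w$-blade actually contains a $4$. Hence a $u$-label can never equal a $w$-label (one carries a $3$ and no $4$, the other a $4$ and no $3$), neither can equal a $v$-label (which carries neither), and the sole coincidence is the shared center $(\underbrace{1,\ldots,1}_{\ceil*{\frac{n}{2}}-1},2)$, as intended; distinctness within each individual blade is again inherited from Theorem \ref{th: kupukupu}. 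With all three conditions verified, the quasi-$\left(4,\ceil*{\frac{n}{2}}\right)$-labeling is valid and Corollary \ref{cor: repeat} completes the proof.
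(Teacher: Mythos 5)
Your proposal is correct and is essentially identical to the paper's own proof: the paper exhibits exactly this quasi-$\left(4,\ceil*{\frac{n}{2}}\right)$-labeling of $D_n^3$, with the first blade labeled over $\{1,2\}$ and the second over $\{1,2,3\}$ as in Theorem \ref{th: kupukupu}, and the third blade being the second with symbol $3$ replaced by $4$ (the paper writes both at once via the parameter $j+1$), which also reproduces your hand-labeled $n=3$ case. Your cross-blade distinctness argument and the final appeal to Corollary \ref{cor: repeat} coincide with the paper's reasoning, so there is nothing to correct.
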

\begin{proof}
Let the vertex set of the dicycles are $\{v_i^j\}_{i=1}^n$ where $v_i^j$ denotes the $i\textsuperscript{th}$ vertex of the $j\textsuperscript{th}$ dicycle. Let $v_2^1=v_2^2=v_2^3$ be the shared vertex. Let the arc set be $\{v_i^jv_{i+1}^j\}_{i=1}^{n-1}\cup \{v_n^jv_1^j\}$ for $j=1,2,3$.\\
\n Define the quasi-$\left(4,\ceil*{\frac{n}{2}}\right)$-labeling $l$ as follows:\\[-7mm]
\begin{align*}
l(v_i^j) &=\begin{cases}
(\underbrace{1,1,\ldots,1}_{\ceil*{\frac{n}{2}}}) & \text{if }i=1, j=1\\
((j+1),\underbrace{1,1,\ldots,1}_{\ceil*{\frac{n}{2}}-1}) & \text{if }i=1, j=2,3\\
(\underbrace{1,1,\ldots,1}_{\ceil*{\frac{n}{2}}-1},2) & \text{if }i=2\\
(\underbrace{1,1,\ldots,1}_{\ceil*{\frac{n}{2}}-i+1},\underbrace{2,2,\ldots,2}_{i-1}) & \text{if }3\leq i\leq \ceil*{\frac{n}{2}}, j=1\\
(\underbrace{1,1,\ldots,1}_{\ceil*{\frac{n}{2}}-i+1},2,\underbrace{(j+1),(j+1),\ldots,(j+1)}_{i-2}) & \text{if }3\leq i\leq \ceil*{\frac{n}{2}}+1, j=2,3\\
(\underbrace{2,2,\ldots,2}_{n-i+1},\underbrace{1,1,\ldots,1}_{i-\ceil*{\frac{n}{2}}}) & \text{if }\ceil*{\frac{n}{2}}+1 \leq i\leq n, j=1 \text{ and }n \text{ is odd}\\
(\underbrace{2,2,\ldots,2}_{n-i+1},\underbrace{1,1,\ldots,1}_{i-\frac{n}{2}-1}) & \text{if }\frac{n}{2}+1 \leq i\leq n, j=1 \text{ and }n \text{ is even}\\
(\underbrace{(j+1),(j+1),\ldots,(j+1)}_{n-i+2},\underbrace{1,1,\ldots,1}_{i-\ceil*{\frac{n}{2}}-1}) & \text{if }\ceil*{\frac{n}{2}}+2 \leq i \leq n, j=2,3\text{ and }n \text{ is odd}\\
(\underbrace{(j+1),(j+1),\ldots,(j+1)}_{n-i+2},\underbrace{1,1,\ldots,1}_{i-\frac{n}{2}-2}) & \text{if }\frac{n}{2}+2 \leq i \leq n, j=2,3\text{ and }n \text{ is even}\\
\end{cases}
\end{align*}
\n It is clear from the definition that $l(v_2^1)=l(v_2^2)=l(v_2^3)$ and all vertices have distinct label. It is also immediately follows that $\alpha=4$, $k=\ceil*{\frac{n}{2}}$, and deBruijn property holds. The theorem then follows from Corollary \ref{cor: repeat}.
\end{proof}

\vspace{3mm}
\begin{theorem}
$L^m\left(C_n\cdot C_p\cdot C_q\right)$ are DNA graph for $n\geq 4$, $p,q\in \{n,n+1,n+2\}$, and $m\in \mathbb{Z}^+$.
\label{th: propeller}
\end{theorem}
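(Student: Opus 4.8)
The plan is to exhibit a quasi-$\left(4,\ceil*{\frac{n}{2}}\right)$-labeling of $C_n\cdot C_p\cdot C_q$ and then conclude with Corollary \ref{cor: repeat} exactly as in Theorem \ref{th: windmill}. I would keep the windmill labeling as a backbone: fix the shared vertex at $\left(\underbrace{1,\ldots,1}_{k-1},2\right)$ with $k=\ceil*{\frac{n}{2}}$, label the first blade $C_n$ as the $j=1$ cycle of Theorem \ref{th: windmill} on the palette $\{1,2\}$, and assign the second and third blades the palettes $\{1,2,3\}$ and $\{1,2,4\}$, i.e.\ characteristic digits $c=3$ and $c=4$. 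With this choice the distinctness check is almost free: a label containing a $3$ can only sit on $C_p$, a label containing a $4$ only on $C_q$, and the labels on $\{1,2\}$ are exactly those of $C_n$ together with the shared vertex. It therefore suffices to show that a single blade on a three-symbol palette $\{1,2,c\}$, passing through the fixed shared vertex, can be given length $n$, $n+1$, or $n+2$.

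Length $n$ is the windmill blade itself. For length $n+1$ I would insert the peak vertex $\left(\underbrace{c,\ldots,c}_{k}\right)$ on the arc crossing the top of the blade; its two neighbours each overlap it in the constant string of $k-1$ copies of $c$, so the deBruijn property survives, and the peak is a new label since it uses only $c$. This needs a split by the parity of $n$, because for even $n$ the windmill blade already contains the peak, so the roles of the base blade and the $+1$ blade shift by one and the extra vertex must be created elsewhere. Small cases such as $n=4,5$ I would simply tabulate, in the style of Table \ref{tab: chords}.

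The crux is length $n+2$. No second constant-overlap insertion is available: along the blade the only constant overlaps are the $c$-string at the peak (forcing the already-used peak) and the $1$-string at the valley (forcing $\left(\underbrace{1,\ldots,1}_{k}\right)=l(v_1^1)$, already on $C_n$). Hence a genuine two-vertex detour $X\to a\to b\to Y$ is required, and a short computation shows it exists only on an arc whose overlap word $w=(w_1,\ldots,w_{k-1})$ is $2$-periodic, $w_{j+2}=w_j$; the inserted pair is then forced to be $\left(w,w_{k-2}\right)$ and $\left(w_2,w\right)$. The monotone windmill blade has no such arc, so I would deliberately weave a short alternating block into the top of the blade, e.g.\ the $2$-cycle $(\ldots,c,\beta,c,\beta)\leftrightarrow(\ldots,\beta,c,\beta,c)$ on the blade's own palette, and route the blade through it; for $n=5$ this is realized by inserting $(2,3,2),(3,2,3)$, lengthening $C_p$ to $7$. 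Verifying that the woven labels clear within the two allotted steps, carrying this out for both parities, and rechecking that they collide neither with the other blade nor with $C_n$ is where essentially all the work lies. This same constraint also explains the ceiling $p,q\leq n+2$: with only three symbols free per blade (the fourth being reserved for the other blade to keep the two label sets disjoint) there is no room to lengthen further without repeating a label, in sharp contrast with the single-blade $\infty$-digraph of Theorems \ref{th: kupueven}--\ref{th: kupuodd}, where all four symbols are available and a blade can reach length $5\ceil*{\frac{n}{2}}+3$.
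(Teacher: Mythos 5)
Your overall architecture---one palette $\{1,2\}$, $\{1,2,3\}$, $\{1,2,4\}$ per blade, shared vertex $(1,\ldots,1,2)$, conclusion via Corollary \ref{cor: repeat}---is the paper's, but the engine of your proof, namely keeping the label length at $k=\ceil*{\frac{n}{2}}$ and lengthening blades by local insertions, has a gap that your own forced-form analysis exposes. You correctly derive that a one-vertex insertion on an arc needs a constant overlap word, and that a two-vertex detour needs an overlap of exact period $2$ with both inserted labels forced. But on a constant overlap the two forced labels coincide (each is the constant word of length $k$), so constant overlaps never admit a two-vertex detour; and for $k\geq 4$ no arc of a windmill blade has an overlap of exact period $2$, because every overlap there is of the form $(1,\ldots,1,2,c,\ldots,c)$ or $(c,\ldots,c,1,\ldots,1)$ (runs possibly empty), and such a word, when not constant, either uses three distinct symbols or has two equal adjacent entries, whereas a word of exact period $2$ alternates two distinct symbols. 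Consequently, for $k\geq 4$ your method actually constructs only the length-$n$ blade (the windmill itself) and, for odd $n$, the length-$(n{+}1)$ blade (peak insertion); the remaining cases---length $n+2$ for odd $n$, and both $n+1$ and $n+2$ for even $n$---all fall to the ``weaving'' step, for which you provide no general construction, no proof that the rerouted blade closes into a cycle of the correct length through the shared vertex, and no distinctness check against the other blades. That step is not a verification detail; it is the theorem, and you carry it out only for $n=5$, where $k=3$ makes the period-$2$ condition vacuous. Your closing assertion that the three-symbol palette forces $p,q\leq n+2$ is also false: already for $k=3$ the cycle $112,123,231,313,132,323,233,331,311$ is a valid blade of length $9$ on $\{1,2,3\}$ through $(1,1,2)$ avoiding every label of the first blade, so no such ceiling exists---the range in the statement is simply what the construction reaches.

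The paper sidesteps all of this by not fixing $k=\ceil*{\frac{n}{2}}$. It introduces a second explicit blade labeling $l^*$ whose labels are one entry longer: for an even-length blade it runs $((j{+}1),1,\ldots,1)$, $(1,\ldots,1,2)$, $(1,\ldots,1,2,(j{+}1),\ldots,(j{+}1))$, \ldots, $(2,(j{+}1),\ldots,(j{+}1),1)$, $((j{+}1),\ldots,(j{+}1),1,\ldots,1)$, and an odd-length blade is obtained by merging two adjacent vertices of this labeling into one. At a common label length $K$, the windmill labeling $l$ of Theorem \ref{th: windmill} yields blades with $2K-1$ or $2K$ vertices, while $l^*$ yields blades with $2K-2$ or $2K-3$ vertices; both assign the shared vertex the same label $(1,\ldots,1,2)$ and keep blade $j$ on its own palette, so the two schemes can be mixed blade by blade. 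Thus four consecutive blade lengths are available simultaneously, and choosing $K=\ceil*{\frac{n}{2}}+1$ (or $K=\ceil*{\frac{n}{2}}$ when it suffices) covers every $p,q\in\{n,n+1,n+2\}$ with no insertion analysis at all. If you want to salvage your fixed-$k$ route, the precise statement you still owe is: for every $k\geq 4$, the de Bruijn digraph on words of length $k$ over $\{1,2,c\}$ contains cycles with $2k+1$ and with $2k+2$ distinct vertices through $(1,\ldots,1,2)$ that avoid all labels of the first blade. The $k=3$ example above makes this plausible, but nothing in your proposal proves it.
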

\begin{proof}
Let the vertex set of the dicycles are $\{v_i^j\}_{i=1}^n$ where $v_i^j$ denotes the $i\textsuperscript{th}$ vertex of the $j\textsuperscript{th}$ dicycle. Let $v_2^1=v_2^2=v_2^3$ be the shared vertex. Let the arc set be $\{v_i^jv_{i+1}^j\}_{i=1}^{n-1}\cup \{v_n^jv_1^j\}$ for $j=1,2,3$.\\
\n First define the quasi-$\left(4,\frac{n}{2}+1\right)$-labeling $l^*$ for even values of $n$:\\[-7mm]
\begin{align*}
l^*(v_i^j)&=\begin{cases}
((j+1),\underbrace{1,1,\ldots,1}_{\frac{n}{2}}) & \text{if } i=1\\
(\underbrace{1,1,\ldots,1}_{\frac{n}{2}},2) & \text{if } i=2\\
(\underbrace{1,1,\ldots,1}_{\frac{n}{2}-i+2},2,\underbrace{(j+1),(j+1),\ldots,(j+1)}_{i-2}) & \text{if }3 \leq i \leq \frac{n}{2}+1\\
(2,\underbrace{(j+1),(j+1),\ldots,(j+1)}_{\frac{n}{2}-1},1) & \text{if }i=\frac{n}{2}+2\\
(\underbrace{(j+1),(j+1),\ldots,(j+1)}_{n-i+2},\underbrace{1,1,\ldots,1}_{i-\frac{n}{2}-1}) & \text{if }\frac{n}{2}+3\leq i\leq n
\end{cases}
\end{align*}
\n For odd $n$, use $l^*(v_{n+1}^j)$ and merge the two vertices labeled
 $(1,2,\underbrace{(j+1),\ldots,(j+1)}_{\frac{n}{2}-1})$ and\\ $(2,\underbrace{(j+1),\ldots,(j+1)}_{\frac{n}{2}-1},1)$ \n into a single vertex labeled $(1,2,\underbrace{(j+1),\ldots,(j+1)}_{\frac{n}{2}-2},1)$.\\
\n It is clear from $l^*$ definition that $l^*(v_2^1)=l^*(v_2^2)=l^*(v_2^3)$ and all vertices have distinct label. It is also immediately follows that $\alpha=4$, $k=\ceil*{\frac{n}{2}}+1$, and deBruijn property holds.
\n Combining $l^*$ and $l$ that given in the proof of Theorem \ref{th: windmill}, we can have either quasi-$\left(4,\ceil*{\frac{n}{2}}\right)$-labeling or quasi-$\left(4,\ceil*{\frac{n}{2}}+1\right)$-labeling of $C_n\cdot C_p\cdot C_q$. The theorem then follows from Corollary \ref{cor: repeat}.
\end{proof}

\begin{figure}[h]
\begin{subfigure}{.5\textwidth}
\centering
\begin{tikzpicture}[->,>=stealth',shorten >=1pt,auto, semithick,scale=0.7]
\tikzstyle{every state}=[draw,circle,thick, fill=black, minimum size=5pt,
                            inner sep=0pt]
	\node[state] (221)  at (0,1)   
	             [label={left:{\footnotesize ${221}$}}]{ };
	\node[state] (211)  at (1,0)  
	             [label={below:{\footnotesize ${211}$}}]{ };
	\node[state] (111)  at (2,1) 
	             [label={below:{\footnotesize ${\text{ }\;\;111}$}}]{ };
	\node[state] (112)  at (3,3) 
	             [label={right:{\footnotesize ${112}$}},yshift=2mm]{ };
	\node[state] (122)  at (1,2) 
	             [label={above:{\footnotesize ${122\;\;\text{ }}$}}]{ };
	
	\node[state] (123)  at (4,1)   
	             [label={below:{\footnotesize ${123\;\;\text{ }}$}}]{ };
	\node[state] (233)  at (5,0)  
	             [label={below:{\footnotesize ${233}$}}]{ };
	\node[state] (331)  at (6,1) 
	             [label={right:{\footnotesize ${331}$}}]{ };
	\node[state] (311)  at (5,2) 
	             [label={above:{\footnotesize ${\text{ }\;\;311}$}}]{ };
	             
	\node[state] (124)  at (4,5)   
	             [label={right:{\footnotesize ${124}$}}]{ };
	\node[state] (244)  at (4,6)  
	             [label={right:{\footnotesize ${244}$}}]{ };
	\node[state] (444)  at (3,7) 
	             [label={above:{\footnotesize ${444}$}}]{ };
	\node[state] (441)  at (2,6) 
	             [label={left:{\footnotesize ${441}$}}]{ };
	\node[state] (411)  at (2,5) 
	             [label={left:{\footnotesize ${411}$}}]{ };            
	             
	\path (112) edge node { } (122) edge node { } (123) edge node { } (124)
	      (122) edge node { } (221)
	      (221) edge node { } (211)
	      (211) edge node { } (111)
	      (111) edge node { } (112)
	      (123) edge node { } (233)
	      (233) edge node { } (331)
	      (331) edge node { } (311)
	      (311) edge node { } (112)
	      (124) edge node { } (244)
	      (244) edge node { } (444)
	      (444) edge node { } (441)
	      (441) edge node { } (411)
	      (411) edge node { } (112)
	      ;
\end{tikzpicture} 
\vspace{-2mm}
\caption{ }
\end{subfigure}   
\begin{subfigure}{.5\textwidth}
\centering
\begin{tikzpicture}[->,>=stealth',shorten >=1pt,auto, semithick,scale=0.7]
\tikzstyle{every state}=[draw,circle,thick, fill=black, minimum size=5pt,
                            inner sep=0pt]
	\node[state] (1221)  at (0,1)   
	             [label={left:{\footnotesize ${1221}$}}]{ };
	\node[state] (2211)  at (1,0)  
	             [label={below:{\footnotesize ${2211}$}}]{ };
	\node[state] (2111)  at (2,1) 
	             [label={below:{\footnotesize ${\text{ }\;\;\;2111}$}}]{ };
	\node[state] (1112)  at (3,3) 
	             [label={right:{\footnotesize ${1112}$}},yshift=2mm]{ };
	\node[state] (1122)  at (1,2) 
	             [label={above:{\footnotesize ${1122\;\;\;\text{ }}$}}]{ };
	
	\node[state] (1123)  at (4,1)   
	             [label={below:{\footnotesize ${1123\;\;\;\text{ }}$}}]{ };
	\node[state] (1233)  at (5,0)  
	             [label={below:{\footnotesize ${1233}$}}]{ };
	\node[state] (2331)  at (6,0)  
	             [label={right:{\footnotesize ${2331}$}}]{ };             
	\node[state] (3311)  at (6,1) 
	             [label={right:{\footnotesize ${3311}$}}]{ };
	\node[state] (3111)  at (5,2) 
	             [label={above:{\footnotesize ${\text{ }\;\;\;3111}$}}]{ };             
	             
    \node[state] (1124)  at (4,5)   
	             [label={right:{\footnotesize ${1124}$}}]{ };
	\node[state] (1244)  at (4,6)  
	             [label={right:{\footnotesize ${1244}$}}]{ };
	\node[state] (2444)  at (3.5,7) 
	             [label={right:{\footnotesize ${2444}$}}]{ };
	\node[state] (4441)  at (2.5,7) 
	             [label={left:{\footnotesize ${4441}$}}]{ };	
	\node[state] (4411)  at (2,6) 
	             [label={left:{\footnotesize ${4411}$}}]{ };
	\node[state] (4111)  at (2,5) 
	             [label={left:{\footnotesize ${4111}$}}]{ };

	\path (1112) edge node { } (1122) edge node { } (1123) edge node { } (1124)
		  (1122) edge node { } (1221)
		  (1221) edge node { } (2211)
		  (2211) edge node { } (2111)
		  (2111) edge node { } (1112)
		  (1123) edge node { } (1233)
		  (1233) edge node { } (2331)
		  (2331) edge node { } (3311)
		  (3311) edge node { } (3111)
		  (3111) edge node { } (1112)
		  (1124) edge node { } (1244)
		  (1244) edge node { } (2444)
		  (2444) edge node { } (4441)
		  (4441) edge node { } (4411)
		  (4411) edge node { } (4111)
		  (4111) edge node { } (1112)
		  ;

\end{tikzpicture} 
\vspace{-2mm}
\caption{ }
\end{subfigure}   
\vspace{-2mm}
\caption{\small (a) A Quasi-$(4,3)$-labeling of $C_5\cdot C_5\cdot C_6$ and (b) A Quasi-$(4,4)$-labeling of $C_5\cdot C_6\cdot C_7$.}
\label{fig: propel}
\end{figure}
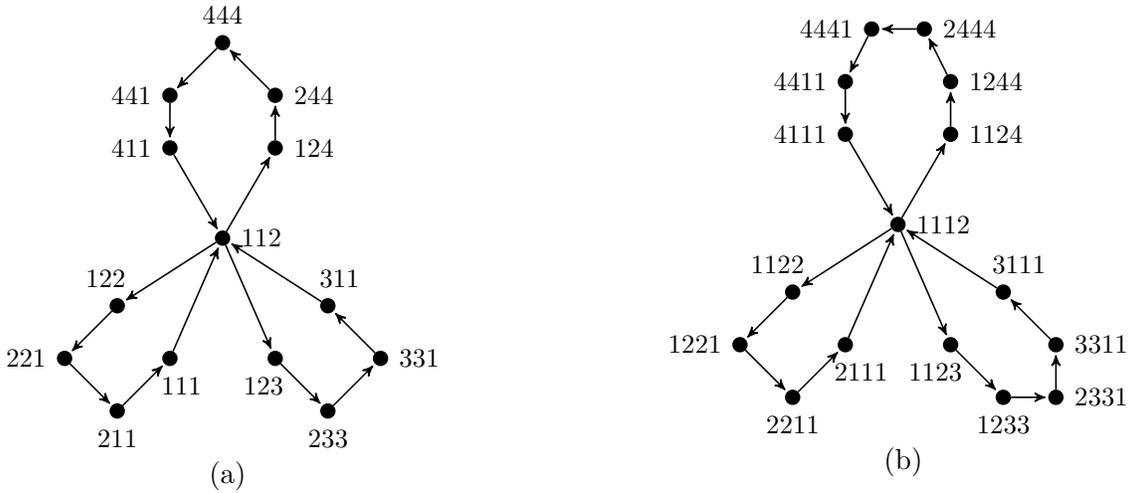

\n In Figure \ref{fig: propel}(a) we use labeling $l(v_5^1),l(v_5^2),l(v_6^3)$ from Theorem \ref{th: windmill} for $C_5,C_5,C_6$, respectively.\\
\n In Figure \ref{fig: propel}(b) we use labeling $l(v_7^3)$ from Theorem \ref{th: windmill} for $C_7$ and labeling $l^*(v_5^1)$ and $l^*(v_5^2)$ from Theorem \ref{th: propeller} for $C_5$ and $C_6$, respectively.

\pagebreak

\end{document}